\newtheorem{theorem}{Theorem}[section]
\newtheorem{lemma}[theorem]{Lemma}
\newtheorem{conjecture}[theorem]{Conjecture}
\newtheorem{corollary}[theorem]{Corollary}
\newtheorem{proposition}[theorem]{Proposition}
\theoremstyle{definition}
\newtheorem{definition}[theorem]{Definition}
\theoremstyle{remark}
\newtheorem{remark}[theorem]{Remark}
\date{version of \today}
\newcommand{\huang}[1]{\textcolor{green}{Huang: #1}}
\title[Categorification of cluster algebras and conjectures on $\bf{g}$-vectors]{Categorification of sign-skew-symmetric cluster algebras and some conjectures on $\bf{g}$-vectors}
\author{Peigen Cao $\;\;\;\;\;\;$ Min Huang $\;\;\;\;\;\;$ Fang Li $\;\;\;\;\;\;$}
\address{Peigen Cao
\newline Department
of Mathematics, Zhejiang University (Yuquan Campus), Hangzhou, Zhejiang
310027,  P.R.China}
\email{peigencao@126.com}
\address{Min Huang
\newline Department
of Mathematics, Zhejiang University (Yuquan Campus), Hangzhou, Zhejiang
310027,  P.R.China}
\email{minhuang1989@hotmail.com}
\address{Fang Li
\newline Department
of Mathematics, Zhejiang University (Yuquan Campus), Hangzhou, Zhejiang
310027, P.R.China}
\email{fangli@zju.edu.cn}
\begin{document}

\begin{abstract}
Using the unfolding method given in \cite{HL}, we prove the conjectures on sign-coherence and a recurrence formula respectively of ${\bf g}$-vectors for acyclic sign-skew-symmetric cluster algebras. As a following consequence, the conjecture is affirmed in the same case which states that the ${\bf g}$-vectors of any cluster form a basis of $\mathbb Z^n$. Also, the additive categorification of an acyclic sign-skew-symmetric cluster algebra $\mathcal A(\Sigma)$ is given, which is realized as $(\mathcal C^{\widetilde Q},\Gamma)$ for a Frobenius $2$-Calabi-Yau category  $\mathcal C^{\widetilde Q}$ constructed from an unfolding $(Q,\Gamma)$ of the acyclic exchange matrix $B$ of $\mathcal A(\Sigma)$.

\end{abstract}

\maketitle

\section{Introduction }

Cluster category was introduced by \cite{BMRRT} for an acyclic quiver. In general, we view a Hom-finite $2$-Calabi-Yau triangulated category $\mathcal C$ which has a cluster structure as a cluster category, see \cite{BIRTS}. In fact, the mutation of a cluster-tilting object $T$ in $\mathcal C$ categorifies the mutation of a quiver $Q$, where the quiver $Q$ is the Gabriel quiver of the algebra $End_{\mathcal C}(T)$. Cluster character gives an explicit correspondence between certain cluster objects of $\mathcal C$ and all the clusters of $\mathcal A(\Sigma(Q))$, where $\Sigma(Q)$ means the seed associated with $Q$. For details, see \cite{FK}, \cite{PP} and \cite{PP1}. Thus, cluster category and cluster character are useful tools to study a cluster algebra.

Let $\mathcal A(\Sigma)$ be a cluster algebra with principal coefficients at $\Sigma=(X,Y,B)$, where $B$ is an $n\times n$ sign-skew-symmetric integer matrix, $Y=(y_1,\cdots,y_n)$. The celebrate Laurent phenomenon says that $\mathcal A$ is a subalgebra of $\mathbb Z[y_{n+1},\cdots,y_{2n}][X^{\pm 1}]$. Setting $deg(x_i)=e_i$, $deg(y_j)=-b_j$, then $\mathbb Z[y_{n+1},\cdots,y_{2n}][X^{\pm 1}]$ becomes to a graded algebra, where $\{e_{i}\;|\;i=1,\cdots,n\}$ is the standard basis of $\mathbb Z^n$ and $b_{i}$ is the $i$-th column of $B$. Under such $\mathbb Z^n$-grading, the cluster algebra $\mathcal A$ is a graded subalgebra, in which the degree of a homogenous element is called its {\bf ${\bf g}$-vector}; furthermore, each cluster variable $x$ in $\mathcal A(\Sigma)$ is homogenous,  denoted its ${\bf g}$-vector as ${\bf g}(x)$. For details, see \cite{fz4}.

It was conjectured that

\begin{conjecture}\label{$g$-vec}(\cite{fz4}, Conjecture 6.13)
For any cluster $X'$ of $\mathcal A(\Sigma)$ and all $x\in X'$, the vectors ${\bf g}(x)$ are sign-coherent, which means that the $i$-th coordinates of all these vectors are either all non-negative or all non-positive.
\end{conjecture}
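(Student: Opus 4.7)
My plan is to reduce the sign-coherence conjecture for $\mathcal A(\Sigma)$ to the already-established skew-symmetric case by using the unfolding $(Q,\Gamma)$ supplied by \cite{HL}. In the skew-symmetric case, sign-coherence of ${\bf g}$-vectors follows from categorification: every ${\bf g}$-vector is the index of a rigid indecomposable object in a $2$-Calabi-Yau category, and the indices of the summands of a single cluster-tilting object are sign-coherent. The Frobenius $2$-Calabi-Yau category $\mathcal C^{\widetilde Q}$ promised by the abstract is precisely what should supply the skew-symmetric input after unfolding.

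First, I would apply \cite{HL} to the acyclic sign-skew-symmetric matrix $B$ to obtain an unfolding: a locally finite (possibly infinite-rank) skew-symmetric quiver $Q$ on a vertex set $I$ equipped with a group $\Gamma$ acting on $I$ whose orbit set is canonically $\{1,\dots,n\}$, together with the property that every mutation sequence on $B$ lifts to a $\Gamma$-equivariant mutation sequence on $Q$. Consequently, every seed $\Sigma'=(X',Y',B')$ of $\mathcal A(\Sigma)$ admits a $\Gamma$-invariant lift $\widetilde\Sigma'$ in $\mathcal A(Q)$, and every cluster variable $x\in X'$ sits over a $\Gamma$-orbit of cluster variables $\{\tilde x_{\tilde\jmath}\}_{\tilde\jmath\in \Gamma\cdot j}$ in the lifted cluster $\widetilde X'$.

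The central step is what I will call the \textbf{folding identity for ${\bf g}$-vectors}: the $k$-th coordinate of ${\bf g}(x)\in\mathbb Z^n$ equals a $\Gamma$-orbit sum over $\tilde k\in\Gamma\cdot k$ of the $\tilde k$-th coordinates of the lifted ${\bf g}$-vectors $\tilde{\bf g}(\tilde x_{\tilde\jmath})$. I plan to prove this by induction on the length of a mutation sequence producing $x$ from the initial cluster, using the $\mathbb Z^n$-grading that defines ${\bf g}$-vectors in tandem with the mutation recurrence for ${\bf g}$-vectors. Under acyclicity, the vertices in each $\Gamma$-orbit are pairwise non-adjacent in $Q$, so their simultaneous mutation is well-defined and, by a direct computation, compatible with summation of degrees along orbits, which lets the inductive step close.

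Once the folding identity is in hand the proof finishes quickly. Fix a cluster $X'$ of $\mathcal A(\Sigma)$ and a row index $k$. For every $\tilde k\in\Gamma\cdot k$, sign-coherence of ${\bf g}$-vectors in the skew-symmetric algebra $\mathcal A(Q)$, read off from indices of the cluster-tilting object in $\mathcal C^{\widetilde Q}$ corresponding to $\widetilde X'$, guarantees that the $\tilde k$-th coordinates of $\tilde{\bf g}(\tilde x_{\tilde\jmath})$ over all $\tilde x_{\tilde\jmath}\in\widetilde X'$ share a common sign. Summing over $\tilde k\in\Gamma\cdot k$ preserves that sign, so the $k$-th coordinates of ${\bf g}(x)$ for $x\in X'$ are sign-coherent. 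The main difficulty I anticipate is the folding identity itself: one must carry the $\mathbb Z^n$-grading through arbitrary mutation sequences, check that the unfolded ${\bf g}$-vectors live in a lattice on which orbit summation is meaningful even when $Q$ has infinitely many vertices, and verify that the categorical sign-coherence input for $\mathcal A(Q)$ is genuinely available in that generality via $\mathcal C^{\widetilde Q}$.
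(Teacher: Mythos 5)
Your overall strategy is the paper's: unfold $B$ to a $\Gamma$-quiver, lift clusters to $\Gamma$-stable cluster-tilting subcategories of $\underline{\mathcal C^{\widetilde Q}}$, identify the ${\bf g}$-vector of a cluster variable with the $\Gamma$-orbit sum of the coordinates of the index of the corresponding object (the paper obtains your ``folding identity'' from the graded surjection $\pi$ of Theorem \ref{ho} together with Proposition \ref{gindex}, which is cleaner than an induction on mutation sequences through the ${\bf g}$-vector recurrence), and then deduce sign-coherence from sign-coherence of indices. However, the final step as you state it has a genuine gap, and it is exactly the point where the paper has to work. You argue: for each fixed $\tilde k$ in the orbit of $k$, the $\tilde k$-th coordinates of the lifted ${\bf g}$-vectors over the lifted cluster share a common sign, and ``summing over $\tilde k\in\Gamma\cdot k$ preserves that sign.'' But the common sign at coordinate $\tilde k$ and the common sign at a different coordinate $\tilde k'$ of the same orbit are a priori unrelated: coordinate-wise sign-coherence is perfectly compatible with, say, $[\mathrm{ind}(\underline{X_s}):\underline{T_{\tilde k}}]=+1$, $[\mathrm{ind}(\underline{X_{s'}}):\underline{T_{\tilde k}}]=0$, $[\mathrm{ind}(\underline{X_s}):\underline{T_{\tilde k'}}]=0$, $[\mathrm{ind}(\underline{X_{s'}}):\underline{T_{\tilde k'}}]=-1$, in which case the two orbit sums have opposite signs and your conclusion fails. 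What is needed is sign-coherence \emph{across the whole $\Gamma$-orbit of coordinates}, i.e.\ that $\underline{T_{\tilde k}}$ and $h\cdot\underline{T_{\tilde k}}$ never occur on opposite sides of the index triangles of two objects of the lifted cluster. This is precisely Lemma \ref{distinct} of the paper (a $\Gamma$-equivariant strengthening of the Dehy--Keller index argument), whose proof requires the auxiliary Hom-finite category $\underline{\mathcal C^{\widetilde Q}}_h$ and the lifting Lemma \ref{lift}; it is the technical heart of Theorem \ref{sign-c}.

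Your argument can be repaired without reproving Lemma \ref{distinct}, but only by adding an equivariance step you did not state: since the lifted cluster-tilting subcategory $\underline{\mathcal T}$ is $\Gamma$-stable and $\mathrm{ind}_{\underline{\mathcal T_0}}(h\cdot\underline Y)=h\cdot\mathrm{ind}_{\underline{\mathcal T_0}}(\underline Y)$, the multiset of $\tilde k$-th index coordinates over all indecomposables of $\underline{\mathcal T}$ coincides with the multiset of $(h\cdot\tilde k)$-th coordinates, so the common sign is constant along each $\Gamma$-orbit of coordinates, and only then does orbit summation preserve it; equivalently, apply ordinary index sign-coherence to the rigid object $h\cdot\underline{X_s}\oplus\underline{X_{s'}}$ rather than to $\underline{X_s}\oplus\underline{X_{s'}}$. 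Either way you must use that the lift of a cluster is closed under the $\Gamma$-action, not merely that each variable admits some lift; as written, your proof does not invoke this and is therefore incomplete at its decisive step.
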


Such conjecture has been proved in the skew-symmetrizable case in (\cite{GHKK}, Theorem 5.11).

\begin{conjecture}\label{basis}(\cite{fz4}, Conjecture 7.10(2))
For any cluster $X'$ of $\mathcal A(\Sigma)$, the vectors ${\bf g}(x),x\in X'$ form a $\mathbb Z$-basis of the lattice $\mathbb Z^n$.
\end{conjecture}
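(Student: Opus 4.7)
The plan is to derive Conjecture \ref{basis} as a formal consequence of the two results established earlier in the paper for acyclic sign-skew-symmetric cluster algebras, namely Conjecture \ref{$g$-vec} on sign-coherence of $\mathbf{g}$-vectors and the recurrence formula for $\mathbf{g}$-vectors under a single mutation.

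Assemble the $\mathbf{g}$-vectors of the cluster $X^t = (x_1^t, \ldots, x_n^t)$ at a seed $t$ into a matrix $G_t \in M_n(\mathbb{Z})$ whose $j$-th column is $\mathbf{g}(x_j^t)$. At the initial seed we have $G_{t_0} = I_n$, since $\mathbf{g}(x_i) = e_i$ by definition of the grading. Conjecture \ref{basis} is equivalent to the assertion that $\det G_t = \pm 1$ for every seed $t$ reachable from $t_0$, and I would establish this by induction on the length of a mutation sequence joining $t_0$ to $t$; the base case is immediate.

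For the inductive step, suppose $t'$ is obtained from $t$ by the mutation $\mu_k$. Sign-coherence (equivalently, for the $k$-th $\mathbf{c}$-vector $\mathbf{c}_k^t$) supplies a well-defined tropical sign $\varepsilon_k \in \{+1,-1\}$. Substituting this into the recurrence formula from the paper, I would factor the mutation as $G_{t'} = G_t \cdot E$, where $E$ is an integer matrix with $1$ on the diagonal except at the $(k,k)$ entry (which equals $-1$), with $k$-th-row entries $E_{kj} = [\varepsilon_k b_{kj}^t]_+$ for $j \neq k$, and zero elsewhere. Such an $E$ is triangular after a permutation of rows and satisfies $\det E = -1$, so $\det G_{t'} = -\det G_t = \pm 1$, and the induction closes.

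The main obstacle is justifying the clean form of the factor $E$: the raw recurrence for $\mathbf{g}$-vectors also carries a correction term built from the initial columns $\mathbf{b}_j^{t_0}$ of $B$, and one must show that the tropical sign $\varepsilon_k$ makes this correction either vanish or be absorbed into the contribution of $\mathbf{g}_k^t$. In the skew-symmetrizable setting this is the Nakanishi--Zelevinsky cancellation; in the acyclic sign-skew-symmetric setting I would obtain it either by a direct algebraic manipulation of the recurrence already proved in the paper, or, more robustly, by lifting to the unfolding $(Q,\Gamma)$ of \cite{HL}, where the analogous factorization holds because the unfolded cluster algebra is skew-symmetric, and descending via $\Gamma$-equivariance to transfer unimodularity of the lifted $G$-matrix back down to $G_t$.
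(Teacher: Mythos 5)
Your overall strategy --- reduce Conjecture \ref{basis} to unimodularity of a $G$-matrix and propagate $\det G=\pm1$ inductively using sign-coherence --- is the right idea in spirit, but you have set the induction up on the wrong variable, and this leaves a genuine gap. You fix the reference seed $t_0$ and mutate the cluster $t\mapsto t'$, so the factorization you need, $G_{t'}=G_t\cdot E$ with $E$ determined by the tropical sign $\varepsilon_k$ of the $k$-th ${\bf c}$-vector, is the Nakanishi--Zelevinsky column-mutation rule for $G$-matrices. That rule is \emph{not} what Theorem \ref{base} provides: Theorem \ref{base} (Conjecture \ref{basechange}) compares ${\bf g}^{B^1;t_1}_{a;t}$ with ${\bf g}^{B^2;t_2}_{a;t}$, i.e.\ it changes the \emph{reference} (principal-coefficients) seed while keeping the cluster at $t$ fixed, whereas your step needs a recurrence in $t$ with the reference seed fixed. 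In the skew-symmetrizable case the two are interchangeable via the tropical duality of \cite{NZ}, but the paper explicitly notes (Remark \ref{remark}(2)) that this duality argument depends on skew-symmetrizability and is unavailable here. So the ``main obstacle'' you identify is precisely the missing ingredient; neither of your proposed remedies is carried out, and the first one (massaging the recurrence already proved in the paper) cannot work directly because that recurrence varies the wrong seed. Your parenthetical ``(equivalently, for the $k$-th ${\bf c}$-vector)'' also quietly invokes the equivalence of ${\bf g}$- and ${\bf c}$-sign-coherence, which again rests on the same duality.

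The intended argument (Remark 7.14 of \cite{fz4}, as invoked in Remark \ref{remark}(1)) transposes your induction. Fix the cluster at $t$ and move the reference seed along the path from $t_0$ to $t$ in $\mathbb T_n$. By Theorem \ref{sign-c}, the $k$-th coordinates $g_k^{t_1}$ of all $n$ vectors ${\bf g}^{B^1;t_1}_{a;t}$, $a\in[1,n]$, share a common sign, so in the recurrence of Conjecture \ref{basechange} the term $\min(g_k^{t_1},0)$ equals $0$ for every $a$ or equals $g_k^{t_1}$ for every $a$; hence the passage from $\{{\bf g}^{B^1;t_1}_{a;t}\}_a$ to $\{{\bf g}^{B^2;t_2}_{a;t}\}_a$ is left multiplication of each of these vectors by a single integer matrix that differs from the identity only in the $k$-th column, with $(k,k)$-entry $-1$ and hence determinant $-1$. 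Such a map sends a $\mathbb Z$-basis of $\mathbb Z^n$ to a $\mathbb Z$-basis. When the reference seed reaches $t$ itself the ${\bf g}$-vectors are $e_1,\dots,e_n$, and walking back to $t_0$ gives Conjecture \ref{basis}. If you prefer to keep your formulation $G_{t'}=G_tE$, you would first have to establish the $C$-matrix/tropical-sign recurrence in the sign-skew-symmetric setting, which is substantially more than Theorems \ref{sign-c} and \ref{base} give you.
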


In terms of cluster pattern, fixed a regular tree $\mathbb T_n$, for any vertices $t$ and $t_0$ of $\mathbb T_n$, let ${\bf g}^{B^0; t_0}_{1; t},\cdots, {\bf g}^{B^0; t_0}_{n; t}$ denote the ${\bf g}$-vectors of the cluster variables in the seed $\Sigma_t$ with respect to the principal coefficients seed $\Sigma_{t_0}=(X_0,Y,B_0)$. When set different vertices to be principal coefficients seeds, it is conjectured that the ${\bf g}$-vectors with respect to a fixed vertex $t$ of $\mathbb T_n$ have the following relation.

\begin{conjecture}\label{basechange}(\cite{fz4}, Conjecture 7.12)
Let $t_1\overset{k}{--} t_2\in \mathbb T_n$ and let $B^2=\mu_k(B^1)$. For $a\in [1,n]$ and $t\in \mathbb T_n$, assume ${\bf g}^{B^1; t_1}_{a; t}=(g_{1}^{t_1},\cdots,g_{n}^{t_1})$ and ${\bf g}^{B^2; t_2}_{a; t}=(g_{1}^{t_2},\cdots,g_{n}^{t_2})$, then
\begin{equation}\label{eqn3.1}
g_i^{t_2}=\begin{cases}-g_k^{t_1}& \text{if }i=k;\\g_i^{t_1}+[b_{ik}^{t_1}]_+g_k^{t_1}-b_{ik}^{t_1}min(g_k^{t_1},0)&\text{if }i\neq k.\end{cases}
\end{equation}
\end{conjecture}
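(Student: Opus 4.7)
The plan is to deduce Conjecture \ref{basechange} from Conjecture \ref{$g$-vec} (whose proof via unfolding is carried out elsewhere in the paper) by lifting the recurrence \eqref{eqn3.1} to the (possibly infinite) skew-symmetric cluster algebra attached to the unfolding $(\widetilde Q,\Gamma)$ of $B$ from \cite{HL} --- where the corresponding identity follows from Nakanishi--Zelevinsky tropical duality --- and then projecting back down to $\mathcal A(\Sigma)$. Throughout, indices $i\in[1,n]$ correspond to $\Gamma$-orbits $\widetilde I_i$ of vertices of $\widetilde Q$, and each mutation $\mu_k$ of $B$ is realized as the composite orbit mutation $\prod_{\widetilde k\in\widetilde I_k}\widetilde\mu_{\widetilde k}$.

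First I would establish a compatibility formula relating the entries $g_i^{t_\ell}$ of $\mathbf g^{B^\ell;t_\ell}_{a;t}$ to the entries $\widetilde g_{\widetilde i}^{\widetilde t_\ell}$ of a lifted $\mathbf g$-vector $\widetilde{\mathbf g}^{\widetilde B^\ell;\widetilde t_\ell}_{\widetilde a;\widetilde t}$, for $\ell=1,2$. This should follow by tracking how the $\mathbb Z^n$-grading on $\mathcal A(\Sigma)$ is recovered from the $\mathbb Z^{\widetilde n}$-grading on the unfolded algebra, using the defining orbit-sum identity $b_{ij}=\sum_{\widetilde j\in\widetilde I_j}\widetilde b_{\widetilde i,\widetilde j}$ (independent of the choice of $\widetilde i\in\widetilde I_i$) of the unfolding, and must be consistent with the two choices $\ell=1,2$ of initial seed.

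Next, since $\widetilde Q$ is skew-symmetric, \cite{GHKK}, Theorem~5.11 yields sign-coherence of $\widetilde{\mathbf g}$-vectors, and Nakanishi--Zelevinsky tropical duality then gives the analogue of \eqref{eqn3.1} for each single mutation $\widetilde\mu_{\widetilde k}$ with $\widetilde k\in\widetilde I_k$. Iterating this identity through the composite orbit mutation $\prod_{\widetilde k\in\widetilde I_k}\widetilde\mu_{\widetilde k}$ and invoking the compatibility formula, the iterated unfolded recurrence descends to a relation between ${\mathbf g}^{B^1;t_1}_{a;t}$ and ${\mathbf g}^{B^2;t_2}_{a;t}$; to conclude, the piecewise-linear operations $[\,\cdot\,]_+$ and $\min(\cdot,0)$ must commute with the orbit-summation, which in turn depends on sign-coherence of the $\widetilde g_{\widetilde k}^{\widetilde t_1}$ across $\widetilde I_k$ (a consequence of Conjecture \ref{$g$-vec}) together with the sign-compatibility of the entries $\widetilde b_{\widetilde i,\widetilde k}$ across each orbit intrinsic to the \cite{HL} unfolding.

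The main obstacle will be the rigorous justification of this final commutation step, jointly managing sign-coherence of $\mathbf g$-vectors (both in $\mathcal A(\Sigma)$ and in the unfolded algebra), sign-coherence of unfolded exchange entries across each orbit, and the bookkeeping of possibly infinite composite orbit mutations. A related technical difficulty is formulating the $\mathbf g$-vector compatibility formula in a manner that is both mutation-equivariant and consistent across the two initial-seed choices $\ell=1,2$; once this foundational compatibility is secured, the descent of the unfolded tropical duality to \eqref{eqn3.1} is essentially algebraic.
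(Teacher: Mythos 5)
Your plan coincides with the paper's proof: the authors likewise lift to the unfolding, invoke the known skew-symmetric recurrence there (via \cite{DWZ} rather than Nakanishi--Zelevinsky, and with Lemma \ref{finite} handling the reduction of infinite orbit mutations to finite subsets), use the graded projection $\pi$ of Theorem \ref{ho} to identify $g_{[i]}$ with $\sum_{i'\in[i]}g_{i'}$, and justify the commutation of $\min(\cdot,0)$ and $[\cdot]_+$ with the orbit sum by sign-coherence of the $g$-entries within each orbit (obtained categorically from Proposition \ref{gindex} and Lemma \ref{distinct}, rather than from \cite{GHKK}). The step you correctly single out as the main obstacle is exactly the content of the paper's Proposition \ref{rec} and the final computation in Theorem \ref{base}.
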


\begin{remark}\label{remark}
(1)~As is said in Remark 7.14 of \cite{fz4}, it is easy to see that Conjectures \ref{$g$-vec} and \ref{basechange} imply Conjecture \ref{basis}.

(2)~In the skew-symmetrizable case, the sign-coherence of the ${\bf c}$-vectors can deduce Conjectures \ref{$g$-vec} and \ref{basechange}, see \cite{NZ}, where the given method strongly depends on the skew-symmetrizability. So far, the similar conclusion have not been given in the sign-skew-symmetric case. Further,  the sign-coherence of the ${\bf c}$-vectors has been proved in \cite{HL} for the acyclic sign-skew-symmetric case, because this is equivalent to $F$-polynomial has constant term $1$, see \cite{fz4}. Therefore, for the acyclic sign-skew-symmetric case, it is interesting to study directly Conjectures \ref{$g$-vec} and \ref{basechange}.
\end{remark}

Unfolding of skew-symmetrizable matrices is introduced by Zelevinsky, whose aim is to characterize skew-symmetrizable cluster algebras using the version in skew-symmetric case. The second and third authors of this paper improved in \cite{HL} such method to arbitrary sign-skew-symmetric matrices. According to this previous work, there is an unfolding $(\widetilde Q, F, \Gamma)$ of any acyclic $m\times n$ matrix $\widetilde B$, and also a $2$-Calabi-Yau Frobenius category $\mathcal C^{\widetilde Q}$ with $\Gamma$ action constructed.

Our motivation and the main results of this paper are two-fold.

(1)~Give the $\Gamma$-equivariant cluster character for $\underline {\mathcal C^Q}$, which can be
regarded as the additive categorification of the cluster algebra $\mathcal A(\Sigma(Q))$. See Theorem \ref{char} and Theorem \ref{reach}.

(2)~Solve Conjectures \ref{$g$-vec} and \ref{basechange} in the acyclic case. See Theorem \ref{sign-c} and Theorem \ref{base}. As a consequence, in the same case,  Conjecture \ref{basis} follows to be affirmed.

\section{An overview of unfolding method}\label{2}

In this section, we give a brief introduction of the concept of unfolding of totally sign-skew-symmetric cluster algebras and some necessary results in \cite{HL}.

For any sign-skew-symmetric matrix $B\in Mat_{n\times n}(\mathbb Z)$, one defines a quiver $\Delta(B)$ as follows: the vertices are $1,\cdots, n$ and there is an arrow from $i$ to $j$ if and only if $b_{ij}>0$. $B$ is called {\bf acyclic} if $\Delta(B)$ is acyclic, a cluster algebra is called {\bf acyclic} if it has an acyclic exchange matrix, see \cite{fz3}.

A locally finite {\bf ice quiver} is a pair $(Q,F)$ where $Q$ is a locally finite quiver without 2-cycles or loops and $F\subseteq Q_0$ is a subset of vertices called {\bf frozen vertices} such that there are no arrows among vertices of $F$. For a locally finite ice quiver $(Q,F)$, we can associate an (infinite) skew-symmetric row and column finite (i.e. having at most finite nonzero entries in each row and column) matrix $(b_{ij})_{i\in Q_0,j\in Q_0\setminus F}$, where $b_{ij}$ equals to the number of arrows from $i$ to $j$ minus the number of arrows from $j$ to $i$. In case of no confusion, for convenience, we also denote the ice quiver $(Q, F)$ as $(b_{ij})_{i\in Q_0, j\in Q_0\setminus F}$.

We say that {\em an ice quiver $(Q,F)$ admits the action of a group $\Gamma$} if $\Gamma$ acts on $Q$ such that $F$ is stable under the action. Let $(Q,F)$ be a locally finite ice quiver with an action of a group $\Gamma$ (maybe infinite). For a vertex $i\in Q_0\setminus F$, a {\bf $\Gamma$-loop} at $i$ is an arrow from $i$ to $h\cdot i$ for some $h\in \Gamma$, a {\bf $\Gamma$-$2$-cycle} at $i$ is a pair of arrows $i\rightarrow j$ and $j\rightarrow h\cdot i$ for some $j\notin \{h'\cdot i\;|\;h'\in \Gamma\}$ and $h\in \Gamma$. Denote by $[i]$ the orbit set of $i$ under the action of $\Gamma$. Say that {\bf $(Q, F)$ has no $\Gamma$-loops} (respectively, {\bf $\Gamma$-$2$-cycles}) {\bf at} $[i]$ if $(Q,F)$ has no $\Gamma$-loops ($\Gamma$-$2$-cycles, respectively) at any $i'\in [i]$.

\begin{definition}\label{orbitmu}(Definition 2.1, \cite{HL})
Let $(Q,F)=(b_{ij})$ be a locally finite ice quiver with an group $\Gamma$ action. Denote $[i]=\{h\cdot i\;|\;h\in \Gamma\}$ the orbit of vertex $i\in Q_0\setminus F$. Assume that $(Q, F)$ admits no $\Gamma$-loops and no $\Gamma$-$2$-cycles at $[i]$, we define an {\bf adjacent ice quiver}  $(Q', F)=(b'_{i'j'})_{i'\in Q_0, j'\in Q_0\setminus F}$ from $(Q,F)$ to be the quiver by following:

(1)~ The vertices are the same as $Q$,

(2)~ The arrows are defined as
 \[\begin{array}{ccl} b'_{jk} &=&
         \left\{\begin{array}{ll}
             -b_{jk}, &\mbox{if $j\in [i]$ or $k\in [i]$}, \\
              b_{jk}+\sum\limits_{i'\in[i]}\frac{|b_{ji'}|b_{i'k}+b_{ji'}|b_{i'k}|}{2}, &\mbox{otherwise.}
         \end{array}\right.
 \end{array}\]
Denote $(Q',F)$ as $\widetilde\mu_{[i]}((Q,F))$ and call $\widetilde\mu_{[i]}$  the {\bf orbit mutation} at direction $[i]$ or at $i$ under the action $\Gamma$. In this case, we say that $(Q,F)$ can {\bf do orbit mutation at} $[i]$.
\end{definition}

Note that if $\Gamma$ is the trivial group $\{e\}$, then the definition of orbit mutation of a quiver is the same as that of quiver mutation (see \cite{fz1}\cite{fz2}).

\begin{definition}\label{unfolding}(Definition 2.4, \cite{HL}) (i)~
For a locally finite ice quiver $(Q,F)=(b_{ij})_{i\in Q_0,j\in Q_0\setminus F}$ with a group $\Gamma$ (maybe infinite) action, let $\overline {Q_0}$ (respectively, $\overline F$) be the orbit sets of the vertex set $Q_0$ (respectively, the frozen vertex set $F$) under the $\Gamma$-action. Assume that $m=|\overline {Q_0}|<+\infty$, $m-n=|\overline{F}|$ and $Q$ has no $\Gamma$-loops and no $\Gamma$-$2$-cycles.

Define a sign-skew-symmetric matrix $B(Q,F)=(b_{[i][j]})_{[i]\in \overline{Q_0},[j]\in \overline{Q_0}\setminus \overline{F}}$ to $(Q,F)$ satisfying (1)~ the size of the matrix $B(Q,F)$ is $m\times n$; (2)~ $b_{[i][j]}=\sum\limits_{i'\in [i]}b_{i'j}$ for $[i]\in \overline {Q_0}$, $[j]\in \overline{Q_0}\setminus\overline{F}$.

(ii)~ For an $m\times n$ sign-skew-symmetric matrix $B$, if there is a locally finite ice quiver $(Q,F)$ with a group $\Gamma$ such that $B=B(Q,F)$ as constructed in (i), then we call $(Q,F,\Gamma)$  a {\bf covering} of $B$.

(iii)~ For an $m\times n$ sign-skew-symmetric matrix $B$, if there is a locally finite quiver $(Q,F)$ with an action of group $\Gamma$ such that $(Q,F,\Gamma)$  is a covering of $B$ and $(Q,F)$ can do arbitrary steps of orbit mutations, then $(Q,F,\Gamma)$ is called an {\bf unfolding} of $B$; or equivalently, $B$ is called the {\bf folding} of $(Q,F,\Gamma)$.
\end{definition}

\begin{remark}
The definition of unfolding is slight different with that in \cite{HL} where the definition was just applied to square matrices.
\end{remark}

By Lemma 2.5 of \cite{HL}, we have the following consequence.

\begin{lemma}\label{mut}
If $(Q, F, \Gamma)$ is an unfolding of $B$, for any sequence $[i_1],\cdots,[i_s]$ of orbits of $Q_0\setminus F$ under the action of $\Gamma$, then $(\widetilde \mu_{[i_s]}\cdots \widetilde \mu_{[i_s]}(Q, F), \Gamma)$ is a covering of $\mu_{[i_s]}\cdots\mu_{[i_1]}B$.
\end{lemma}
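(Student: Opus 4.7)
My plan is to proceed by induction on the length $s$ of the mutation sequence, reducing everything to the base case $s=1$: a single orbit mutation $\widetilde\mu_{[i_1]}$ of the cover folds to the single matrix mutation $\mu_{[i_1]}$ of $B$. The inductive step is automatic once the base case is in hand, because the definition of an unfolding explicitly requires that arbitrary sequences of orbit mutations may be performed, so after one orbit mutation we again obtain a covering of $\mu_{[i_1]}B$ to which the $s=1$ case applies at the next direction $[i_2]$.

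For the base case, write $(Q',F) = \widetilde\mu_{[i_1]}(Q,F)$ with entries $b'_{j'k'}$, and compare $B(Q',F)$, whose $([j],[k])$-entry is $\sum_{j'\in[j]} b'_{j'k}$ for any chosen representative $k\in[k]\notin\overline F$, against $\mu_{[i_1]}B$ computed from Definition \ref{unfolding} via the standard matrix mutation rule. I split into three cases. If $[j]=[i_1]$ or $[k]=[i_1]$, the orbit-mutation rule in Definition \ref{orbitmu} only flips signs termwise, and summing over $j'\in[j]$ produces exactly the sign flip of $b_{[j][k]}$ required by mutation. The substantive case is $[j]\ne[i_1]$, $[k]\ne[i_1]$, where the task becomes the identification
\[
\sum_{j'\in[j]}\sum_{i'\in[i_1]}\frac{|b_{j'i'}|\,b_{i'k}+b_{j'i'}\,|b_{i'k}|}{2}
\;=\;
\frac{|b_{[j][i_1]}|\,b_{[i_1][k]}+b_{[j][i_1]}\,|b_{[i_1][k]}|}{2}.
\]
This is where the unfolding hypothesis earns its keep: the constraints packaged into Definition \ref{unfolding} (no $\Gamma$-loops, no $\Gamma$-$2$-cycles, row-column finiteness, and closure under orbit mutation) force the entries $b_{j'i'}$ (respectively $b_{i'k}$) to be sign-coherent as $(j',i')$ ranges over $[j]\times[i_1]$ (respectively $[i_1]$), which is exactly what allows the double sum to factor through the absolute values and collapse onto the single product on the right.

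The main obstacle I anticipate is precisely this collapse of the double sum via sign-coherence across orbits; in the square case this is the content of Lemma~2.5 of \cite{HL}. The present statement differs from that reference only in allowing frozen rows coming from $\overline F$, and this extension is formal: frozen vertices are never mutation directions and support no arrows among themselves, so they contribute only extra rows to $B(Q,F)$ that fall into the case $[j]\in\overline F$, $[i_1]\in\overline{Q_0}\setminus\overline F$, handled by the same computation as Case~3 above. Hence the lemma reduces to the square-matrix statement of \cite{HL}, as the authors indicate.
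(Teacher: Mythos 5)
Your argument is correct and follows the same route as the paper, which simply invokes Lemma~2.5 of \cite{HL}: the single-step folding identity you isolate (with the sign-coherence of the entries $b_{j'i'}$ over $[j]\times[i_1]$ supplied by the no-$\Gamma$-$2$-cycle condition at $[i_1]$ together with $\Gamma$-equivariance) is precisely the content of that lemma, and your observation that frozen orbits only contribute extra rows handled by the same computation is the intended, purely formal extension to the rectangular case.
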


By Theorem 2.16 of \cite{HL}, we have

\begin{theorem}\label{mainlemma}
If $\widetilde B\in Mat_{m\times n}(\mathbb Z)$ ($m\geq n$) is an acyclic sign-skew-symmetric matrix, then $\widetilde B$ has an unfolding $(\widetilde Q,F,\Gamma)$, where $\widetilde Q$ is given using of Construction 2.6 in \cite{HL}.
\end{theorem}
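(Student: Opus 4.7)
The plan is to build $\widetilde Q$ explicitly from $\widetilde B$ using its acyclic ordering, introduce an acting group $\Gamma$ with the correct orbit structure, verify the covering identity, and then check inductively that arbitrary sequences of orbit mutations are legal. Since $\widetilde B$ is acyclic, I first fix a topological order $1 < 2 < \cdots < n$ on the principal part of $\Delta(\widetilde B)$. Following Construction 2.6 of \cite{HL}, I inductively attach vertices to $\widetilde Q$ in orbits $[1], \ldots, [m]$, each orbit being a copy of $\Gamma$ (or a singleton for frozen indices $n+1, \ldots, m$), and distribute the arrows between $[i]$ and $[j]$ so that for every fixed representative $j' \in [j]$ the signed arrow count from $[i]$ to $j'$ equals $\widetilde b_{ij}$. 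The group $\Gamma$ is generated by the natural symmetries permuting the copies within each mutable orbit; its action on $\widetilde Q$ manifestly preserves $F$.

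Next I would verify that $(\widetilde Q, F, \Gamma)$ is a covering of $\widetilde B$ in the sense of Definition \ref{unfolding}(ii): the identity $\sum_{i' \in [i]} b_{i'j} = \widetilde b_{ij}$ holds by the way the arrows were distributed, and $|\overline{\widetilde Q_0}| = m$, $|\overline F| = m-n$ by construction. I would also record that $\widetilde Q$ is locally finite and has no loops or $2$-cycles, since $\widetilde B$ is acyclic and has integer entries so only finitely many arrows emanate from any vertex.

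The substantive step is showing that $(\widetilde Q, F)$ admits arbitrarily long sequences of orbit mutations, i.e.\ that neither a $\Gamma$-loop nor a $\Gamma$-$2$-cycle ever appears. I would argue by induction on the length of the mutation sequence. The base case uses the topological order together with the fact that Construction 2.6 places no arrow between two vertices of the same orbit. For the inductive step, I would first observe that $\widetilde\mu_{[k]}$ is $\Gamma$-equivariant, because the Fomin--Zelevinsky formula is applied simultaneously to each $k' \in [k]$; hence the mutated quiver still carries a $\Gamma$-action for which $F$ is stable. To rule out a new $\Gamma$-loop at $i'\in [i]$, I would analyze the paths $i' \to k' \to h\cdot i'$ in the pre-mutation quiver: $\Gamma$-equivariance forces such paths to come in families whose contributions to a putative loop cancel, and by Lemma \ref{mut} the folded matrix $\mu_k(\widetilde B)$ remains acyclic (or at worst has controlled $2$-cycle behavior forbidden by sign-skew-symmetry at the folded level), which translates back upstairs to the absence of $\Gamma$-loops and $\Gamma$-$2$-cycles.

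The main obstacle is precisely this last step: controlling length-$2$ paths between vertices lying in the same $\Gamma$-orbit under iterated orbit mutation. This is where both the specific freeness properties of Construction 2.6 and the acyclicity hypothesis are genuinely used; without acyclicity the induction collapses because one loses the tool that lets one compare upstairs paths with their downstairs folding. Once this is verified, every composite of orbit mutations is defined on $(\widetilde Q, F)$, so by Definition \ref{unfolding}(iii), $(\widetilde Q, F, \Gamma)$ is an unfolding of $\widetilde B$, completing the argument.
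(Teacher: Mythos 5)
There is a genuine gap here, and it stems from a misreading of what this particular theorem actually requires. The paper's proof is a short reduction to the already-established square case: writing $\widetilde B=\left(\begin{smallmatrix} B\\ B'\end{smallmatrix}\right)$ with $B\in Mat_{n\times n}(\mathbb Z)$, one completes it to the square sign-skew-symmetric matrix $\widetilde B'=\left(\begin{smallmatrix} B & -{B'}^{T}\\ B' & 0\end{smallmatrix}\right)$, observes that $\widetilde B'$ is still acyclic, invokes Construction 2.6 and Theorem 2.16 of \cite{HL} --- which already produce an unfolding of any acyclic \emph{square} sign-skew-symmetric matrix --- and then declares the vertices lying over the rows of $B'$ to be the frozen set $F$. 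The only new content is this completion-and-freezing step; the existence of unfoldings in the square case is cited, not reproved. Your proposal contains no trace of this reduction and instead sets out to rebuild the square-case theorem from scratch.

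That rebuilding is where the argument fails. The crucial step --- that no $\Gamma$-loop or $\Gamma$-$2$-cycle ever appears under \emph{arbitrary} iterated orbit mutation --- is exactly the deep content of Theorem 2.16 of \cite{HL}, and your sketch of it does not go through. The assertion that ``by Lemma \ref{mut} the folded matrix $\mu_k(\widetilde B)$ remains acyclic'' is false: matrix mutation does not preserve acyclicity (an acyclic cluster algebra generally has non-acyclic seeds), and Lemma \ref{mut} only says that the orbit-mutated quiver covers the mutated matrix. Likewise, sign-skew-symmetry of the folded matrix at a given stage does not by itself exclude $\Gamma$-loops or $\Gamma$-$2$-cycles upstairs, so the proposed ``translate back upstairs'' step has no support; your own text concedes that this ``main obstacle'' is left unverified. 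To prove the stated theorem you should treat the square-case unfolding as a black box and supply only the reduction: the passage from the rectangular $\widetilde B$ to the square acyclic $\widetilde B'$, the application of the cited theorem, and the verification that freezing the vertices over $B'$ yields a covering of $\widetilde B$ closed under orbit mutation in the non-frozen directions.
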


\begin{proof}
Assume $\widetilde B=\left(\begin{array}{c}
B  \\
B'
\end{array}\right)$ with $B\in Mat_{n\times n}(\mathbb Z)$. Denote $\widetilde B'=\left(\begin{array}{cc}
B & -{B'}^{T}  \\
B' &  0
\end{array}\right)$. Since $\widetilde B$ is acyclic, $\widetilde B'\in Mat_{m\times m}(\mathbb Z)$ is acyclic. According to Construction 2.6 and Theorem 2.16 of \cite{HL}, $\widetilde B'$ has an unfolding $(\widetilde Q,\Gamma)$. Let $F\subset \widetilde Q_0$ be the vertices of $\widetilde Q_0$ corresponding to $B'$. Thus, it is clear that $(\widetilde Q,F,\Gamma)$ is an unfolding of $\widetilde B$.
\end{proof}

\begin{remark}
In \cite{HL}, we proved that $\widetilde Q$ is {\bf strongly almost finite}, that is,  $\widetilde Q$ is locally finite and has no path of  infinite length.
\end{remark}

This theorem means that an acyclic matrix is always totally sign-skew-symmetric. Thus, we can define a cluster algebra via an acyclic  matrix.

For an  acyclic matrix $\widetilde B\in Mat_{m\times n}(\mathbb Z)$ ($m\geq n$), assume $(\widetilde Q,F, \Gamma)$ is an unfolding of $\widetilde B$. Denote $\overline{\widetilde Q_0}$ and $\overline F$ be the orbits sets of vertices in $\widetilde Q_0$ and $F$. Let $\widetilde\Sigma=\Sigma(\widetilde Q)=(\widetilde X, \widetilde Y, \widetilde Q)$ be the seed associated with $(\widetilde Q, F)$, where $\widetilde X=\{x_{u}\;|\;u\in \widetilde Q_0\setminus F\}$, $\widetilde Y=\{y_v\;|\;v\in F\}$. Let $\Sigma=\Sigma(\widetilde B)=(X, Y,\widetilde B)$ be the seed associated with $\widetilde B$, where $X=\{x_{[i]}\;|\;[i]\in \overline{\widetilde Q_0}\setminus \overline F\}$, $Y=\{y_{[j]}\;|\;[j]\in \overline{F}\}$. It is clear that there is a surjective algebra homomorphism:
\begin{equation}\label{pi}
\pi:\;\; \mathbb Q[x^{\pm 1}_i, y_j\;|\;i\in \widetilde Q_0\setminus F, j\in F]\rightarrow \mathbb Q[x^{\pm 1}_{[i]},y_{[j]}\;|\;[i]\in \overline{\widetilde Q_0}\setminus \overline F, [j]\in \overline{F}]
\end{equation}
such that  $\pi(x_{i})= x_{[i]}$ and $\pi(y_j)= y_{[j]}$.

For any cluster variable $x_u\in \widetilde X$, define $\widetilde\mu_{[i]}(x_u)=\mu_u(x_u)$ if $u\in [i]$; otherwise,  $\widetilde\mu_{[i]}(x_u)=x_u$ if $u\not\in [i]$. Formally, write $\widetilde\mu_{[i]}(\widetilde X)=\{\widetilde\mu_{[i]}(x)\;|\;x\in \widetilde X\}$ and $\widetilde\mu_{[i]}({\widetilde X}^{\pm 1})=\{\widetilde\mu_{[i]}(x)^{\pm 1}\;|\;x\in \widetilde X\}$.

\begin{lemma}\label{mutationorbit}(Lemma 7.1, \cite{HL})
Keep the notations as above. Assume that $B$ is acyclic. If $[i]$ is an orbit of vertices with $i\in \widetilde Q_0\setminus F$, then

(1)~$\widetilde\mu_{[i]}(x_j)$ is a cluster variable of $\mathcal A(\widetilde Q)$ for any $j\in \widetilde Q_0\setminus F$,

(2)~$\widetilde\mu_{[i]}(\widetilde X)$ is algebraic independent over $\mathbb Q[y_j\;|\;j\in F]$.
\end{lemma}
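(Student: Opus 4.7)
The plan is to reduce both claims to statements about finitely many ordinary single-vertex mutations in the (possibly infinite-rank) cluster algebra $\mathcal A(\widetilde Q)$, using the fact that distinct vertices of the orbit $[i]$ are pairwise non-adjacent in $\widetilde Q$. Indeed, the hypothesis that $(\widetilde Q,F)$ admits no $\Gamma$-loops and no $\Gamma$-$2$-cycles at $[i]$ implies that for any $u,u'\in[i]$ with $u\neq u'$ there is no arrow between $u$ and $u'$ in $\widetilde Q$. Consequently the exchange relation at $u'\in[i]$ involves no $x_{u''}$ with $u''\in [i]\setminus\{u'\}$, so the single-vertex mutations $\mu_u$ and $\mu_{u'}$ pairwise commute for all $u,u'\in[i]$.

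For part (1), recall the definition $\widetilde\mu_{[i]}(x_j)=\mu_j(x_j)$ when $j\in [i]$ and $\widetilde\mu_{[i]}(x_j)=x_j$ otherwise. If $j\notin[i]$ the conclusion is immediate, as $x_j\in\widetilde X$. If $j\in[i]$, then since $\widetilde Q$ is locally finite at $j$ the single-vertex mutation $\mu_j$ is well defined, and $\mu_j(x_j)$ is a cluster variable of $\mathcal A(\widetilde Q)$ by construction.

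For part (2) I would verify algebraic independence on each finite subset $T\subseteq\widetilde\mu_{[i]}(\widetilde X)$. Set $I=\{u\in[i]:\mu_u(x_u)\in T\}$, which is finite because $T$ is. By the commutativity above, any ordering of the single mutations $\mu_u$ ($u\in I$) applied to $\widetilde X$ produces the same set
\[\widetilde X'=\{\mu_u(x_u):u\in I\}\cup\{x_k:k\in\widetilde Q_0\setminus(F\cup I)\}\supseteq T.\]
At each intermediate step the exchange relation $x_u\cdot\mu_u(x_u)=M_u^{+}+M_u^{-}$ is of degree one in the mutated variable, so by induction on $|I|$ the fraction field generated by $\widetilde X'$ over $\mathbb{Q}[y_j:j\in F]$ coincides with the one generated by $\widetilde X$. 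Since $\widetilde X$ is algebraically independent over $\mathbb{Q}[y_j:j\in F]$, so is $\widetilde X'$, and hence so is its subset $T$. As $T$ was arbitrary, $\widetilde\mu_{[i]}(\widetilde X)$ is algebraically independent.

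The main obstacle is conceptual rather than computational: the orbit $[i]$ can be infinite, so $\widetilde\mu_{[i]}$ is a simultaneous mutation at infinitely many vertices with no direct analogue in the classical (finite-rank) theory. The essential point to articulate carefully is that every algebraic question one can pose about $\widetilde\mu_{[i]}(\widetilde X)$ — whether a specific element is a cluster variable, or whether a finite collection is algebraically independent — involves only finitely many of the $[i]$-mutations, and these commute by the non-adjacency extracted from the no-$\Gamma$-loops / no-$\Gamma$-$2$-cycles assumption. After this reduction, the remainder is a routine appeal to single-vertex mutation in a locally finite cluster algebra.
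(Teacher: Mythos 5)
The paper offers no argument for this lemma: it is imported verbatim from Lemma 7.1 of \cite{HL}, so there is no in-text proof to compare yours against. Your strategy is the natural one and is essentially correct. The two observations that carry the proof are both right: the absence of $\Gamma$-loops at $[i]$ forces $b_{uu'}=0$ for all distinct $u,u'\in[i]$, so the exchange binomial $M_u^{+}+M_u^{-}$ involves no variable indexed by $[i]\setminus\{u\}$ and the single mutations at vertices of $[i]$ pairwise commute; and every assertion about $\widetilde\mu_{[i]}(\widetilde X)$ is finitary, so one only ever performs finitely many genuine mutations. Part (1) is then immediate from the definition $\widetilde\mu_{[i]}(x_j)=\mu_j(x_j)$ together with local finiteness of $\widetilde Q$.

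The one inference you should tighten is the last one in part (2): from ``$\widetilde X'$ and $\widetilde X$ generate the same field over $\mathbb Q(y_j\,:\,j\in F)$ and $\widetilde X$ is algebraically independent'' one cannot conclude, for infinite sets, that $\widetilde X'$ is algebraically independent (the set $\{x_1,\,x_1x_2,\,x_2,\,x_3,\dots\}$ generates the same field as $\{x_1,x_2,\dots\}$). The repair uses only what you have already set up. Given a finite $T$, let $I$ index its mutated elements and let $K$ be the union of the unmutated indices occurring in $T$ with the non-frozen neighbours of the vertices in $I$; since each $M_u^{\pm}$ involves only frozen variables and $x_k$ with $k\in K$, the field $\mathbb Q(y_j\,:\,j\in F)(x_u,\,u\in I;\;x_k,\,k\in K)$ coincides with $\mathbb Q(y_j\,:\,j\in F)(\mu_u(x_u),\,u\in I;\;x_k,\,k\in K)$ and has transcendence degree $|I|+|K|$, so the latter generating set, having exactly $|I|+|K|$ elements, is algebraically independent, and $T$ is a subset of it. With that count inserted the proof is complete. (Incidentally, only the no-$\Gamma$-loop hypothesis is needed for the non-adjacency; the no-$\Gamma$-$2$-cycle condition governs the mutated quiver rather than this lemma.)
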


By Lemma \ref{mutationorbit}, $\widetilde\mu_{[i]}(\widetilde \Sigma):=(\widetilde\mu_{[i]}(\widetilde X), \widetilde Y, \widetilde\mu_{[i]}(\widetilde Q))$ is a seed. Thus, we can define $\widetilde\mu_{[i_s]}\widetilde\mu_{[i_{s-1}]}\cdots\widetilde\mu_{[i_1]}(x)$ and $\widetilde\mu_{[i_s]}\widetilde\mu_{[i_{s-1}]}\cdots\widetilde\mu_{[i_1]}(\widetilde X)$ and $\widetilde\mu_{[i_s]}\widetilde\mu_{[i_{s-1}]}\cdots\widetilde\mu_{[i_1]}(\widetilde \Sigma)$ for any sequence $([i_1],[i_2],\cdots,[i_s])$ of orbits in $Q_0$.

\begin{theorem}\label{sur}(Theorem 7.5, \cite{HL})
Keep the notations as above with an acyclic sign-skew-symmetric matrix $B$ and $\pi$ as defined in (\ref{pi}).
 Restricting $\pi$ to $\mathcal A(\widetilde\Sigma)$, then $\pi:\mathcal A(\widetilde\Sigma)\rightarrow \mathcal A(\Sigma)$ is a surjective algebra morphism satisfying that $\pi(\widetilde\mu_{[j_k]}\cdots\widetilde\mu_{[j_1]}(x_{a} ))=\mu_{[j_k]}\cdots\mu_{[j_1]}(x_{[i]})\in \mathcal A(\Sigma)$ and $\pi(\widetilde\mu_{[j_k]}\cdots\widetilde\mu_{[j_1]}(\widetilde X))=\mu_{[j_k]}\cdots\mu_{[j_1]}(X)$ for any sequences of orbits $[j_1],\cdots,[j_k]$ and any $a\in [i]$.
\end{theorem}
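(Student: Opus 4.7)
The plan is to proceed by induction on the length $k$ of the orbit-mutation sequence, showing that the folding map $\pi$ intertwines $\widetilde\mu_{[j]}$ with $\mu_{[j]}$ at the level of cluster variables; surjectivity then follows immediately, since every cluster variable of $\mathcal A(\Sigma)$ arises from some $x_{[i]}$ by a finite sequence of orbit mutations, and by definition $\pi(y_j)=y_{[j]}$ captures all frozen variables.

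The base case $k=0$ is the definition $\pi(x_a)=x_{[a]}$. For the inductive step, set $\widetilde\Sigma':=\widetilde\mu_{[j_{k-1}]}\cdots\widetilde\mu_{[j_1]}(\widetilde\Sigma)$ with extended exchange matrix $(b'_{vw})$, and $\Sigma':=\mu_{[j_{k-1}]}\cdots\mu_{[j_1]}(\Sigma)$ with matrix $(b'_{[v][w]})$; by Lemma \ref{mut}, $\Sigma'$ is the folding of $\widetilde\Sigma'$. Writing $y'_a$ for the cluster variable at $a$ in $\widetilde\Sigma'$, the inductive hypothesis gives $\pi(y'_a)=z'_{[a]}$, the cluster variable at $[a]$ in $\Sigma'$. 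When $a\notin [j_k]$, both $\widetilde\mu_{[j_k]}(y'_a)=y'_a$ and $\mu_{[j_k]}(z'_{[a]})=z'_{[a]}$, so compatibility is trivial. When $a\in [j_k]$, I apply $\pi$ to the exchange relation
\[
y'_a\,\widetilde\mu_{[j_k]}(y'_a) \;=\; \prod_{v\in \widetilde Q_0\setminus F}(y'_v)^{[b'_{va}]_+}\prod_{v\in F}y_v^{[b'_{va}]_+} \;+\; \prod_{v\in \widetilde Q_0\setminus F}(y'_v)^{[-b'_{va}]_+}\prod_{v\in F}y_v^{[-b'_{va}]_+},
\]
and regroup the factors by $\Gamma$-orbit. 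The key regrouping identity
\[
\sum_{v'\in [v]}[b'_{v'a}]_+ \;=\; \Bigl[\sum_{v'\in [v]} b'_{v'a}\Bigr]_+ \;=\; [b'_{[v][a]}]_+
\]
holds precisely when the entries $b'_{v'a}$, $v'\in [v]$, share the same sign; by $\Gamma$-equivariance of $\widetilde\Sigma'$ this sign-coherence is equivalent to the absence of a $\Gamma$-$2$-cycle at $[a]$ in $\widetilde\Sigma'$, a property preserved along every orbit-mutation sequence by Definition \ref{unfolding}(iii) combined with Lemma \ref{mut}. After regrouping, the right-hand side becomes the exchange relation at $[a]$ in $\Sigma'$, forcing $\pi(\widetilde\mu_{[j_k]}(y'_a))=\mu_{[j_k]}(z'_{[a]})$, which closes the induction.

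Surjectivity is then immediate: every cluster variable of $\mathcal A(\Sigma)$ is of the form $\mu_{[j_k]}\cdots\mu_{[j_1]}(x_{[i]})$, hence lies in $\pi(\mathcal A(\widetilde\Sigma))$ by the identity just established applied to any representative $a\in[i]$. That $\pi$ actually lands inside $\mathcal A(\Sigma)$ follows because on each cluster of $\widetilde\Sigma$ obtained by an orbit-respecting sequence the images are cluster variables of $\Sigma$, and any other cluster variable of $\widetilde\Sigma$ admits, via the Laurent phenomenon, a Laurent expansion in such an orbit-respecting cluster; its $\pi$-image is thus a Laurent polynomial in elements of $\mathcal A(\Sigma)$ and therefore in $\mathcal A(\Sigma)$.

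The main obstacle is the regrouping step: everything rests on sign-coherence of $b'_{v'a}$ over $v'\in[v]$ at every intermediate seed, which I handle by translating it into the no-$\Gamma$-$2$-cycle condition and invoking the permanence of that condition under orbit mutation guaranteed by the unfolding hypothesis. Without this, the folded matrix entry would not equal the sum of the individual exponents, and $\pi$ would fail to send the exchange relation of $\widetilde\Sigma'$ to that of $\Sigma'$.
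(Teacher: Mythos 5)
The paper gives no proof of Theorem \ref{sur} here --- it is imported verbatim as Theorem 7.5 of \cite{HL} --- so your argument can only be judged on its own terms. Its core is sound: inducting on the length of the orbit-mutation sequence, applying $\pi$ to the exchange relation at $a\in[j_k]$, and regrouping exponents by orbit via $\sum_{v'\in[v]}[b'_{v'a}]_+=[b'_{[v][a]}]_+$ is the right mechanism, and your reduction of the required sign-coherence of $\{b'_{v'a}\}_{v'\in[v]}$ to the absence of $\Gamma$-$2$-cycles at the mutated orbit, guaranteed at every intermediate seed by Definition \ref{unfolding}(iii) together with Definition \ref{orbitmu}, is correct; this is essentially the content packaged in Lemma \ref{mutationorbit} and Lemma \ref{mut}.

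The genuine gap is in your final paragraph, where you claim $\pi(\mathcal A(\widetilde\Sigma))\subseteq\mathcal A(\Sigma)$. The algebra $\mathcal A(\widetilde\Sigma)$ is generated by \emph{all} of its cluster variables, including those obtained by mutation sequences that do not respect $\Gamma$-orbits, and for these the intertwining identity says nothing. Your fallback --- that such a variable is, by the Laurent phenomenon, a Laurent polynomial in an orbit-respecting cluster, hence its image is ``a Laurent polynomial in elements of $\mathcal A(\Sigma)$ and therefore in $\mathcal A(\Sigma)$'' --- is a non sequitur: a cluster algebra is not closed under inverting its cluster variables, so a Laurent polynomial in a cluster need not lie in the cluster algebra (already $x_{[1]}^{-1}$ fails in rank one). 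What your observation actually yields is that the image of every cluster variable of $\mathcal A(\widetilde\Sigma)$ is Laurent in \emph{every} cluster of $\mathcal A(\Sigma)$, i.e.\ lies in the upper cluster algebra of $\Sigma$; to land in $\mathcal A(\Sigma)$ itself you still need an identification of $\mathcal A(\Sigma)$ with its upper bound, as in \cite{fz3} for acyclic seeds, or some other argument controlling the non-orbit-reachable cluster variables. Without that step the map $\pi:\mathcal A(\widetilde\Sigma)\to\mathcal A(\Sigma)$ has not been shown to be well defined, which is part of what the theorem asserts.
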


In case $\mathcal A(\Sigma)$ with principal coefficients, from Lemma \ref{mut}, we may assume that $\mathcal A(\widetilde\Sigma)$ is also with principal coefficients. Let $\lambda:\bigoplus\limits_{i\in \widetilde Q_0\setminus F}\mathbb Z e_i\rightarrow \bigoplus\limits_{[i]\in \overline{\widetilde Q_0}\setminus \overline F}\mathbb Z e_{[i]}, e_i\rightarrow e_{[i]}$ be the group homomorphism, where $\bigoplus\limits_{i\in \widetilde Q_0\setminus F}\mathbb Z e_i$ (resp. $\bigoplus\limits_{[i]\in \overline {\widetilde Q_0}\setminus \overline F}\mathbb Z e_{[i]}$) is the free abelian group generated by $\{e_i\;|\;i\in \widetilde Q_0\setminus F\}$ (resp. $\{e_{[i]}\;|\;[i]\in \overline{\widetilde Q_0}\setminus \overline F\}$). Under such group homomorphism, $\mathcal A(\widetilde\Sigma)$ becomes a $\mathbb Z^n$-graded algebra such any cluster variable $x$ is homogenous with degree $\lambda({\bf g}(x))$.

\begin{theorem}\label{ho}
Keep the notations as in Theorem \ref{sur}. If $\mathcal A(\Sigma)$ with principal coefficients, then the restriction of $\pi$ to $\mathcal A(\widetilde\Sigma)$ is a $\mathbb Z^n$-graded surjective homomorphism.
\end{theorem}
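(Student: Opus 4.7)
The plan is to verify compatibility of $\pi$ with the $\mathbb Z^n$-grading on the generators of the ambient Laurent polynomial rings, and then transfer graded-ness to the subalgebras $\mathcal A(\widetilde\Sigma)$ and $\mathcal A(\Sigma)$. Surjectivity is already given by Theorem \ref{sur}, so only graded-ness has to be shown. Since $\pi$ is defined on the ambient rings by $x_u\mapsto x_{[u]}$ and $y_v\mapsto y_{[v]}$, and since both $\mathcal A(\widetilde\Sigma)$ and $\mathcal A(\Sigma)$ sit inside their ambient rings as $\mathbb Z^n$-graded subalgebras, it suffices to check that the ambient ring homomorphism preserves the $\mathbb Z^n$-degree on generators.

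For the initial cluster variables, the check is tautological: the $\mathbf g$-vector of $x_u$ in $\mathcal A(\widetilde\Sigma)$ is $e_u$, so its $\lambda$-pushforward degree is $\lambda(e_u)=e_{[u]}$, which is exactly the $\mathbf g$-vector of $\pi(x_u)=x_{[u]}$ in $\mathcal A(\Sigma)$. The real content lies in the coefficient generators. Writing $\widetilde B=(b_{ij})_{i\in\widetilde Q_0,\,j\in\widetilde Q_0\setminus F}$ for the extended exchange matrix of $\widetilde\Sigma$ and letting $j(v)\in\widetilde Q_0\setminus F$ denote the mutable vertex whose principal coefficient is $y_v$, the $\lambda$-degree of $y_v$ in $\mathcal A(\widetilde\Sigma)$ equals
\[
-\lambda(\widetilde b_{j(v)})\;=\;-\sum_{[i]}\Bigl(\sum_{i'\in[i]} b_{i',j(v)}\Bigr) e_{[i]}\;=\;-\sum_{[i]} b_{[i][j(v)]}\,e_{[i]},
\]
where the last equality is Definition \ref{unfolding}(i)(2). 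On the other side, $\pi(y_v)=y_{[v]}$ has degree $-b_{[j(v)]}$ in the principal-coefficient grading of $\mathcal A(\Sigma)$, which is precisely the right-hand side above. Hence the coefficient degrees match as well.

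Combining the two checks, the ambient ring map is $\mathbb Z^n$-graded. Because every element of $\mathcal A(\widetilde\Sigma)$ admits a unique Laurent-monomial decomposition in the ambient ring and $\pi$ preserves this term by term, the restriction $\pi\big|_{\mathcal A(\widetilde\Sigma)}$ carries each $\lambda$-homogeneous piece to the corresponding homogeneous piece of $\mathcal A(\Sigma)$, yielding the desired $\mathbb Z^n$-graded surjective homomorphism. The only delicate point I anticipate is the bookkeeping: one must invoke Lemma \ref{mut} to guarantee that the principal-coefficient extension of $(\widetilde Q,F,\Gamma)$ is still a covering of the principal-coefficient extension of the original $\widetilde B$, so that the identity $\lambda(\widetilde b_{j(v)})=b_{[j(v)]}$ used above is literally the defining property of the unfolding rather than an additional assumption one has to verify.
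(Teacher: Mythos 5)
Your proposal is correct and follows essentially the same route as the paper: check that $\pi$ preserves the $\mathbb Z^n$-degree on the generators $x_u$ and $y_v$ of the ambient ring (with the coefficient case reducing to the identity $\lambda(\widetilde b_{j})=b_{[j]}$ from the definition of the folding matrix), conclude that $\pi$ is graded, and quote Theorem \ref{sur} for surjectivity. Your added remark about invoking Lemma \ref{mut} for the principal-coefficient extension is exactly the observation the paper makes in the paragraph preceding the theorem, so nothing is missing.
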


\begin{proof}
Since $\lambda({\bf g}(x_i))=e_{[i]}={\bf g}(x_{[i]})$ and $\lambda({\bf g}(y_j))=-b_{[j]}={\bf g}(y_{[j]})$, where $b_{[j]}$ is the $[j]$-th column of $B$.
Further, because $\{x^{\pm 1}_i, y_j\;|\;i\in \widetilde Q_0\setminus F, j\in \widetilde Q_0\}$ is a generator of $\mathbb Q[x^{\pm 1}_i, y_j\;|\;i\in \widetilde Q_0\setminus F, j\in \widetilde Q_0]$, thus $\pi$ is homogenous. Then our result follows by Theorem \ref{sur}.
\end{proof}

\section{Cluster character in sign-skew-symmetric case}\label{3}

Let $\widetilde B\in Mat_{m\times n}(\mathbb Z)$ be an acyclic sign-skew-symmetric matrix, $(\widetilde Q, F, \Gamma)$ be an unfolding of $\widetilde B$ given in Theorem \ref{mainlemma}. Denote $\widetilde \Sigma=(\widetilde X,\widetilde Y,\widetilde Q)$ and $\Sigma=(X,Y,\widetilde B)$ be the seeds corresponding to $(\widetilde Q, F)$ and $\widetilde B$.

From $(\widetilde Q, F,\Gamma)$, we constructed a $2$-Calabi-Yau Frobenius category $\mathcal C^{\widetilde Q}$ in \cite{HL} such that $\Gamma$ acts on it exactly, i.e. each $h\in \Gamma$ acts on $\mathcal C^{\widetilde Q}$ as an exact functor. Furthermore, there exists a cluster tilting subcategory $\mathcal T_0$ of $\mathcal C^{\widetilde Q}$ such that the Gabriel quiver of $\underline {\mathcal T_0}$ is isomorphic to $\widetilde Q$, where $\underline {\mathcal T_0}$ is the subcategory of the stable category $\underline{\mathcal C^{\widetilde Q}}$ corresponding to $\mathcal T_0$. For details, see Lemma 4.15 of \cite{HL}. Since $\mathcal C^{\widetilde Q}$ is a $Hom$-finite $2$-Calabi-Yau Frobenius category,  $\underline{\mathcal C^{\widetilde Q}}$ follows to be a $Hom$-finite $2$-Calabi-Yau triangulated category. Write $[1]$ as the shift functor in $\underline{\mathcal C^{\widetilde Q}}$. For any object $X$ and subcategory $\mathcal X$ of $\mathcal C^{\widetilde Q}$, we denote by $\underline{X}$ and  $\underline{\mathcal X}$  the corresponding object and  subcategory of $\underline{\mathcal C^{\widetilde Q}}$ respectively. Since the action of the group $\Gamma$ on $\mathcal C^{\widetilde Q}$ is exact, $\underline{\mathcal C^{\widetilde Q}}$ also admits an exact $\Gamma$-action.

Since $\mathcal C^{\widetilde Q}$ is a Frobenius category, by the standard result, see \cite{BIRTS}, we have that

\begin{lemma}\label{extpre}
$Ext^1_{\mathcal C^{\widetilde Q}}(Z_1,Z_2)\cong Ext^1_{\underline{\mathcal C^{\widetilde Q}}}(\underline{Z_1},\underline{Z_2})$ for all $Z_1,Z_2\in \mathcal C^{\widetilde Q}$.
\end{lemma}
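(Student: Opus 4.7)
The plan is to invoke the standard identification of $Ext^1$ in a Frobenius category with $Hom$ into the shift in its stable category; the stated isomorphism is then immediate from the definition of $Ext^1_{\underline{\mathcal C^{\widetilde Q}}}(\underline{Z_1},\underline{Z_2})$ as $Hom_{\underline{\mathcal C^{\widetilde Q}}}(\underline{Z_1},\underline{Z_2}[1])$. Recall that, because $\mathcal C^{\widetilde Q}$ is Frobenius, projectives coincide with injectives, and $\underline{\mathcal C^{\widetilde Q}}$ is triangulated with shift $[1]$ defined via a fixed choice, for each object $Z$, of a conflation $0\to Z\to I_Z\to Z[1]\to 0$ with $I_Z$ injective. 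The task therefore reduces to proving the classical isomorphism
\[
Ext^1_{\mathcal C^{\widetilde Q}}(Z_1,Z_2)\cong Hom_{\underline{\mathcal C^{\widetilde Q}}}(\underline{Z_1},\underline{Z_2}[1]).
\]

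To build the forward map, I would start with a conflation $0\to Z_2\to E\to Z_1\to 0$; by injectivity of $I_{Z_2}$, the inflation $Z_2\hookrightarrow I_{Z_2}$ extends through $Z_2\to E$, producing a morphism $E\to I_{Z_2}$ and, by passage to cokernels in the standard conflation $0\to Z_2\to I_{Z_2}\to Z_2[1]\to 0$, a morphism $Z_1\to Z_2[1]$. Any two such extensions $E\to I_{Z_2}$ differ by a morphism factoring through an injective, so the induced arrow in $\underline{\mathcal C^{\widetilde Q}}$ depends only on the equivalence class of the conflation. Conversely, given a stable map $f\colon\underline{Z_1}\to\underline{Z_2}[1]$, I would pull back the standard conflation for $Z_2$ along a representative lift of $f$ to obtain a conflation whose extension class provides the inverse; independence from the chosen lift again follows because two lifts differ by a map through an injective, yielding equivalent pullback conflations.

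That these two constructions are mutually inverse is a routine diagram chase, as is the verification of additivity. Since the entire argument depends only on the axioms of a Frobenius category and uses no feature specific to $\mathcal C^{\widetilde Q}$ or to the $\Gamma$-action, there is no real obstacle; the result is a direct instance of Happel's classical description of stable categories of Frobenius categories, as cited in the paper via \cite{BIRTS}, and indeed the intended proof in the source is simply to invoke this general fact.
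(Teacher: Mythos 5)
Your proposal is correct and matches the paper's approach: the paper itself offers no argument beyond the phrase ``Since $\mathcal C^{\widetilde Q}$ is a Frobenius category, by the standard result, see \cite{BIRTS}, we have that\dots'', i.e.\ it simply invokes Happel's classical identification $Ext^1_{\mathcal C}(Z_1,Z_2)\cong Hom_{\underline{\mathcal C}}(\underline{Z_1},\underline{Z_2}[1])$, which is exactly the fact you cite and then prove in the standard way. Your filled-in construction of the mutually inverse maps via the conflation $0\to Z_2\to I_{Z_2}\to Z_2[1]\to 0$ is the textbook argument and is sound.
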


The category $\underline{\mathcal C^{\widetilde Q}}$ can be viewed as an additive categorification of the cluster algebra $\mathcal A(\widetilde\Sigma)$ given from $\widetilde Q$. For details, refer to \cite{FK} and \cite{PP1}. Although the authors of \cite{FK} and \cite{PP1} deal with the cluster algebras of  finite ranks, it is easy to see that these results still hold in $\underline{\mathcal C^{\widetilde Q}}$ since  $Q''$, as well as  $\widetilde Q$, is a strongly almost finite quiver.

Denote $\underline{\mathcal T_0}=add(\underline{\mathcal T'}\cup \underline{\mathcal T''})$, where $\underline{\mathcal T'}$ and $\underline{\mathcal T''}$ respectively consist of the indecomposable objects correspondent to cluster variables in the clusters $\widetilde X$ and $\widetilde Y$ of $\widetilde \Sigma$.

 Let $\mathcal U$ be the subcategory of $\underline{\mathcal C^{\widetilde Q}}$ generated by $\{\underline X\in \underline{\mathcal C^{\widetilde Q}}\;|\;Hom_{\underline{\mathcal C^{\widetilde Q}}}(\underline {T}[-1],\underline X)=0, \;\forall\; \underline T\in \underline{\mathcal T''}\}.$

For any $\underline{X}\in \mathcal U $, let $\underline{T}_1\rightarrow \underline{T}_0\overset{f}{\rightarrow}\underline{X}\rightarrow \underline{T}_1[1]$ be the triangle with $f$ the minimal right $\underline{\mathcal T_0}$-approximation.  By Lemma \ref{extpre}, $\underline{\mathcal T_0}$ is a cluster tilting subcategory of $\underline{\mathcal C^{\widetilde Q}}$. Applying $Hom_{\underline{\mathcal C^{\widetilde Q}}}(\underline{T},-)$ to the triangle for all $\underline{T}\in \underline{\mathcal T_0}$, we have $\underline{T}_1\in \underline{\mathcal T_0}$. The index of $\underline X$ is defined as $$ind_{\underline{\mathcal T_0}}(\underline{X})=[\underline{T}_0]-[\underline{T}_1]\in K_0(\underline{\mathcal T_0})\cong \mathbb Z^{|\widetilde Q_0|}.$$

Recall that the Gabriel quiver of $\underline{\mathcal T_0}$ is isomorphic to $\widetilde Q$. We may assume that $\{X_i\;|\;i\in \widetilde Q_0\}$ is the complete set of the indecomposable objects of $\underline{\mathcal T_0}$. For any $L\in mod\underline{\mathcal T_0}$, we denote $(dim_k (L(X_i)))_{i\in \widetilde Q_0}\in \bigoplus\limits_{i\in \widetilde Q_0}\mathbb Z^{e_i}$ as its dimensional vector.

After the preparations, we give the definition of cluster character $CC(\;\;)$ on $\underline{\mathcal C^{\widetilde Q}}$. For any $i\in \widetilde Q_0\setminus F$, since $\widetilde Q$ is strongly almost finite, we can set $$\widehat{y}_i=\prod\limits_{j\in \widetilde Q_0\setminus F} x_j^{b_{ji}}\prod\limits_{j'\in F}y_{j'}^{b_{j'i}}\in \mathbb Q[x_i^{\pm 1}, y_j\;|\;i\in \widetilde Q_0\setminus F,j\in F].$$ For each rigid object $\underline X\in \mathcal U$, we define

$$CC(\underline X)=\widetilde{\mathbf{x}}^{ind_{\underline{\mathcal T_0}}(\underline X)}\sum\limits_{\mathbf{a}\in\bigoplus\limits_{i\in \widetilde Q_0}\mathbb Ze_i}\chi(Gr_{\mathbf{a}}(Hom_{\underline{\mathcal C^{\widetilde Q}}}(-,\underline{X}[1])))\prod\limits_{j\in \widetilde Q_0\setminus F}\widehat{y_j}^{a_j},$$ where $\widetilde{\mathbf{x}}^{\mathbf{a}}=\Pi x^{a_i}_{i}$ for $\mathbf{a}=(a_i)_{i\in \widetilde Q_0}\in \bigoplus\limits_{i\in \widetilde Q_0}\mathbb Ze_i$, $Gr_{\mathbf{a}}(Hom_{\underline{\mathcal C}}(-,\underline{X}[1]))$ is the quiver Grassmannian whose points are corresponding to the sub-$\underline{\mathcal T_0}$-representations of $Hom_{\underline{\mathcal C^{\widetilde Q}}}(-,\underline{X}[1])$ with dimension vector $\mathbf{a}$, and $\chi$ is the Euler characteristic with respect to \'{e}tale cohomology with proper support.  It is easy to see that $CC(\underline X)\in \mathbb Q[x_i^{\pm 1}, y_j\;|\;i\in \widetilde Q_0\setminus F,j\in F]$.

\begin{theorem}\label{cluster ch}
Keeps the notations as above. Then

(1)  $CC(\underline {T_i})=x_i$ for all $\underline {T_i}\in \underline{\mathcal T'}$.

(2) $CC(\underline X\oplus \underline X')=CC(\underline X)CC(\underline X')$ for any objects $\underline X,\underline X'\in \mathcal U$.

(3)  $CC(\underline X)CC(\underline Y)=CC(\underline Z)+CC(\underline Z')$ for $\underline X,\underline Y\in \mathcal U$ with $dimExt^1_{\underline{\mathcal C^{\widetilde Q}}}(\underline X,\underline Y)=1$ and the two non-splitting triangles:\;
$ \underline Y\rightarrow \underline Z \rightarrow \underline X \rightarrow \underline Y[1]$ \;and\; $\underline X\rightarrow \underline Z' \rightarrow \underline Y \rightarrow \underline X[1].$
\end{theorem}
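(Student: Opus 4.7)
The overall strategy is to adapt to the strongly almost finite setting the cluster character formalism developed by Palu in \cite{PP1} and by Fu and Keller in \cite{FK} for Hom-finite $2$-Calabi-Yau triangulated categories of finite rank. The observation that makes this adaptation possible is that, although $\widetilde Q_0$ is infinite, $\widetilde Q$ is locally finite and admits no infinite paths while $\underline{\mathcal C^{\widetilde Q}}$ is Hom-finite; consequently, for every fixed $\underline X\in \mathcal U$, the $\underline{\mathcal T_0}$-module $Hom_{\underline{\mathcal C^{\widetilde Q}}}(-,\underline X[1])$ is supported on a finite set of indecomposables of $\underline{\mathcal T_0}$ and is finite dimensional. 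This uses the triangle $\underline T_1\to \underline T_0\to \underline X\to \underline T_1[1]$ together with the fact that $Hom_{\underline{\mathcal C^{\widetilde Q}}}(\underline T_i,\underline T_0)$ is nonzero for only finitely many $i$, which is forced by strong almost finiteness of $\widetilde Q$. Hence the sum defining $CC(\underline X)$ has only finitely many nonzero terms, each $\widehat y_j$ occurring is itself a finite product, and $CC(\underline X)$ is a well-defined element of $\mathbb Q[x_i^{\pm 1},y_j\;|\;i\in \widetilde Q_0\setminus F, j\in F]$. With these finiteness checks in place, I expect the three statements to follow by transposing the arguments of \cite{PP1} and \cite{FK}, which I sketch in turn.

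For (1), if $\underline{T_i}\in \underline{\mathcal T'}\subseteq \underline{\mathcal T_0}$ then the minimal right $\underline{\mathcal T_0}$-approximation of $\underline{T_i}$ is its identity, so $ind_{\underline{\mathcal T_0}}(\underline{T_i})=e_i$ and $\widetilde{\mathbf x}^{ind_{\underline{\mathcal T_0}}(\underline{T_i})}=x_i$. Since $\underline{\mathcal T_0}$ is cluster tilting, $Hom_{\underline{\mathcal C^{\widetilde Q}}}(\underline T,\underline{T_i}[1])=0$ for every $\underline T\in \underline{\mathcal T_0}$, so the functor $Hom_{\underline{\mathcal C^{\widetilde Q}}}(-,\underline{T_i}[1])$ vanishes as a $\underline{\mathcal T_0}$-module; only the term $\mathbf a=0$ contributes $\chi(\mathrm{pt})=1$, and $CC(\underline{T_i})=x_i$ follows.

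For (2), I would combine three ingredients: additivity of $ind_{\underline{\mathcal T_0}}$ under direct sums (since minimal right approximations split), the decomposition $Hom_{\underline{\mathcal C^{\widetilde Q}}}(-,(\underline X\oplus \underline X')[1])\cong Hom_{\underline{\mathcal C^{\widetilde Q}}}(-,\underline X[1])\oplus Hom_{\underline{\mathcal C^{\widetilde Q}}}(-,\underline X'[1])$ as $\underline{\mathcal T_0}$-modules, and the standard identity $Gr_{\mathbf a}(M\oplus N)=\bigsqcup_{\mathbf b+\mathbf c=\mathbf a}Gr_{\mathbf b}(M)\times Gr_{\mathbf c}(N)$ together with multiplicativity of the Euler characteristic. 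Reindexing the resulting double sum factors the defining expression of $CC(\underline X\oplus \underline X')$ as $CC(\underline X)CC(\underline X')$.

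Part (3) is the heart of the theorem and the main obstacle. I plan to follow Palu's exchange formula proof: given the two nonsplit exchange triangles with $\dim Ext^1_{\underline{\mathcal C^{\widetilde Q}}}(\underline X,\underline Y)=1$, the $2$-Calabi-Yau duality $Ext^1_{\underline{\mathcal C^{\widetilde Q}}}(\underline X,\underline Y)\cong D\,Ext^1_{\underline{\mathcal C^{\widetilde Q}}}(\underline Y,\underline X)$ forces the reverse space to be one-dimensional as well, so the two triangles are dual to each other. The argument then splits into two parts: first, an index identity relating $ind_{\underline{\mathcal T_0}}(\underline Z)$ and $ind_{\underline{\mathcal T_0}}(\underline Z')$ to $ind_{\underline{\mathcal T_0}}(\underline X)+ind_{\underline{\mathcal T_0}}(\underline Y)$ modulo $\underline{\mathcal T_0}$-correction terms, which is what produces the Laurent monomial prefactors in $CC(\underline Z)+CC(\underline Z')$; second, a geometric identity comparing the Euler characteristics of the quiver Grassmannians attached to $\underline Z$ and $\underline Z'$ with those attached to $\underline X$ and $\underline Y$, obtained from the long exact sequences of $Hom_{\underline{\mathcal C^{\widetilde Q}}}(-,?[1])$ induced by the two exchange triangles and from stratifying Grassmannians by the interaction of sub-representations with the connecting morphism. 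The anticipated difficulty is bookkeeping: checking that the index identity, the long exact sequences, and the stratifications of quiver Grassmannians remain valid when $\underline{\mathcal T_0}$ has infinitely many indecomposables. However, the strong almost finiteness of $\widetilde Q$ ensures that each $Hom_{\underline{\mathcal C^{\widetilde Q}}}(-,?[1])$ appearing is finite-dimensional and finitely supported, so that Palu's and Fu-Keller's computations apply with essentially no change.
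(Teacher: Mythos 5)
Your proposal matches the paper's proof, which simply observes that the argument of Fu--Keller (Theorem 3.3 of \cite{FK}) carries over because $\widetilde Q$ is strongly almost finite; your finiteness checks and the sketches of (1)--(3) are exactly the adaptation the authors have in mind. No discrepancy to report.
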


\begin{proof} This theorem can be proved similarly as that of (\cite{FK}, Theorem 3.3) using of local finiteness of $\widetilde Q$ since it is strongly almost finite.
\end{proof}

Like \cite{D}, for any $\underline X\in \mathcal U$, we define
$ind'_{\underline{\mathcal T_0}}(\underline{X})=\lambda(ind_{\underline{\mathcal T_0}}(\underline{X}))\in \bigoplus\limits_{[i]\in \overline{\widetilde Q_0}}\mathbb Ze_{[i]}.$

For each rigid object $\underline X\in \mathcal U$, using the definition of $\pi$ in (\ref{pi}), we define the {\bf $\Gamma$-equivariant cluster character} as follows:
$$CC'(\underline X)=\pi(CC(\underline X))=\mathbf{x}^{ind'_{\underline{\mathcal T_0}}\underline X}\sum\limits_{\mathbf{a}\in\bigoplus\limits_{i\in \widetilde Q_0}\mathbb Ze_i}\chi(Gr_{\mathbf{a}}Hom_{\underline{\mathcal C^{\widetilde Q}}}(-,\underline{X}[1]))\prod\limits_{[j]\in \overline{\widetilde Q_0}\setminus \overline F}\widehat{y}_{[j]}^{\lambda(\mathbf a)_{[j]}},$$ where $\mathbf{x}^{\mathbf{a}}=\Pi x^{a_{[i]}}_{[i]}$ for $\mathbf{a}=(a_{[i]})_{[i]\in \overline{\widetilde Q_0}\setminus \overline{F}}\in \bigoplus\limits_{[i]\in \overline{\widetilde Q_0}\setminus \overline{F}}\mathbb Ze_{[i]}$ and $\widehat y_{[i]}=\pi(\widehat y_i)=\prod\limits_{[j]\in \overline{\widetilde Q_0}\setminus \overline{F}} x_{[j]}^{b_{[j][i]}}\prod\limits_{[j']\in \overline{\widetilde Q_0}\setminus \overline{F}}y_{[j']}^{b_{[j'][i]}}.$

Inspired by Definition 3.34 of \cite{D}, we give the following definition,

\begin{definition}
 Two objects $\underline X, \underline X'\in \underline{\mathcal C^{\widetilde Q}}$ are said to be  {\bf equivalent modulo} $\Gamma$ if $\underline X\cong \bigoplus\limits_{k=1}^m\underline X_k,\; \underline X'\cong \bigoplus\limits_{k=1}^m\underline X'_k$ with $add\{h\cdot \underline X_k\;|\;h\in\Gamma\}= add\{h\cdot \underline X'_k\;|\;h\in\Gamma\}$ for every $k$ and indecomposables $\underline X_k,\underline X'_k$.
\end{definition}

Similar to Lemma 3.49 of \cite{D}, we have the following lemma,

\begin{lemma}
The Laurent polynomial $CC'(\underline X)$ depends only on the class of $\underline X$ under equivalent modulo $\Gamma$ for and $\underline X\in \underline{\mathcal C^{\widetilde Q}}$.
\end{lemma}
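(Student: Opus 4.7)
The plan is to reduce the statement to a single generating case and then verify $\Gamma$-invariance factor by factor. By the multiplicativity identity $CC(\underline X \oplus \underline X') = CC(\underline X)CC(\underline X')$ in Theorem \ref{cluster ch}(2) and the fact that $CC' = \pi \circ CC$ is an algebra map composed with $CC$, the same multiplicativity holds for $CC'$. Hence, writing $\underline X \cong \bigoplus_k \underline X_k$ and $\underline X' \cong \bigoplus_k \underline X'_k$ as in the definition, it suffices to handle one summand at a time, and further to show that $CC'(h\cdot \underline X_k) = CC'(\underline X_k)$ for any indecomposable rigid $\underline X_k\in \mathcal U$ and any $h\in \Gamma$, since the condition $add\{h\cdot \underline X_k\mid h\in \Gamma\} = add\{h\cdot \underline X'_k\mid h\in \Gamma\}$ forces $\underline X'_k$ to lie in the $\Gamma$-orbit of $\underline X_k$.

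Next I would handle the monomial prefactor $\mathbf{x}^{ind'_{\underline{\mathcal T_0}}(\underline X)}$. Because $\Gamma$ acts on $\mathcal C^{\widetilde Q}$ by exact autoequivalences stabilizing the cluster-tilting subcategory $\mathcal T_0$, and permutes its indecomposables $\{X_i\mid i\in \widetilde Q_0\}$ according to the $\Gamma$-action on $\widetilde Q_0$, applying $h$ to the minimal $\underline{\mathcal T_0}$-approximation triangle $\underline T_1\to \underline T_0 \to \underline X\to \underline T_1[1]$ gives the corresponding triangle for $h\cdot \underline X$. Thus $ind_{\underline{\mathcal T_0}}(h\cdot \underline X) = h_\ast\, ind_{\underline{\mathcal T_0}}(\underline X)$ where $h_\ast:K_0(\underline{\mathcal T_0})\to K_0(\underline{\mathcal T_0})$ is the permutation $e_i\mapsto e_{h\cdot i}$. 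Since $\lambda$ factors through $\Gamma$-orbits by construction, $\lambda\circ h_\ast = \lambda$, so $ind'_{\underline{\mathcal T_0}}(h\cdot \underline X) = ind'_{\underline{\mathcal T_0}}(\underline X)$ and the prefactor is $\Gamma$-invariant.

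For the summation term, I would use the exactness of $h$ to produce a natural isomorphism of $\underline{\mathcal T_0}$-modules
\[ Hom_{\underline{\mathcal C^{\widetilde Q}}}(-, (h\cdot \underline X)[1]) \;\cong\; h_\ast\, Hom_{\underline{\mathcal C^{\widetilde Q}}}(-, \underline X[1]), \]
where $h_\ast M$ is the $\underline{\mathcal T_0}$-module defined by $(h_\ast M)(X_i)=M(h^{-1}\cdot X_i)$. This identifies $Gr_{\mathbf a}(Hom_{\underline{\mathcal C^{\widetilde Q}}}(-,(h\cdot \underline X)[1]))$ with $Gr_{h_\ast^{-1}\mathbf a}(Hom_{\underline{\mathcal C^{\widetilde Q}}}(-,\underline X[1]))$, so after the change of summation index $\mathbf a\mapsto h_\ast \mathbf a$ the sum for $h\cdot \underline X$ equals
\[ \sum_{\mathbf a}\chi\bigl(Gr_{\mathbf a}(Hom_{\underline{\mathcal C^{\widetilde Q}}}(-,\underline X[1]))\bigr)\prod_{[j]\in\overline{\widetilde Q_0}\setminus\overline F}\widehat y_{[j]}^{\lambda(h_\ast \mathbf a)_{[j]}}. \]
Using $\lambda\circ h_\ast = \lambda$ once more, this is exactly the summation in $CC'(\underline X)$, completing the proof.

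The only real obstacle is the careful setup of these compatibilities: namely, checking that the $\Gamma$-action on $K_0(\underline{\mathcal T_0})$ is the one induced by the $\Gamma$-action on vertices of $\widetilde Q$ (this follows from the isomorphism between the Gabriel quiver of $\underline{\mathcal T_0}$ and $\widetilde Q$ being $\Gamma$-equivariant, as established in Lemma 4.15 of \cite{HL}), and that the isomorphism of modules above is natural in the $\underline{\mathcal T_0}$-argument. Once these are in place, the invariance reduces to a formal reindexing of the finite sums defining $CC'$, which is well-defined by the strongly almost finite property of $\widetilde Q$.
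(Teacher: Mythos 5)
Your proposal is correct and follows essentially the same route as the paper's own proof: reduce via multiplicativity to showing $CC'(h\cdot \underline X)=CC'(\underline X)$, then check invariance of $ind'_{\underline{\mathcal T_0}}$ and match the quiver Grassmannians under the reindexing $\mathbf a\mapsto h\cdot\mathbf a$. You simply spell out the compatibilities (the permutation action on $K_0(\underline{\mathcal T_0})$, the identity $\lambda\circ h_\ast=\lambda$, and the module isomorphism) that the paper leaves implicit.
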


\begin{proof}
We need only to prove that $CC'(\underline X)=CC'(h\cdot \underline X)$ for $h\in \Gamma$ by Theorem \ref{cluster ch}(2). It follows immediately from that $ind'_{\underline{\mathcal T_0}}(\underline X)=ind'_{\underline{\mathcal T_0}}(h\cdot \underline X)$ and $\chi(Gr_{\mathbf{a}}Hom_{\underline{\mathcal C^{\widetilde Q}}}(-,\underline{X}[1]))=\chi(Gr_{h\cdot \mathbf{a}}Hom_{\underline{\mathcal C^{\widetilde Q}}}(-,h\cdot\underline{X}[1]))$.
\end{proof}

Using the algebra homomorphism $\pi$ and Theorem \ref{cluster ch}, we have the following theorem at once,

\begin{theorem}\label{char}
Keeps the notations as above. Then

(1)  $CC'(\underline {T_i})=x_{[i]}$ for all $\underline T_i\in \underline{\mathcal T'}$.

(2)  $CC'(\underline X\oplus \underline X')=CC'(\underline X)CC'(\underline X')$ for any two objects $\underline X,\underline X'\in \mathcal U$.

(3)  $CC'(\underline X)CC'(\underline Y)=CC'(\underline Z)+CC'(\underline Z')$ for $\underline X,\underline Y\in \mathcal U$ with  $dimExt^1_{\underline{\mathcal C^{\widetilde Q}}}(\underline X,\underline Y)=1$ satisfying two non-splitting triangles: \;
$ \underline Y\rightarrow \underline Z \rightarrow \underline X \rightarrow \underline Y[1]$ \;and\; $\underline X\rightarrow \underline Z' \rightarrow \underline Y \rightarrow \underline X[1].$
\end{theorem}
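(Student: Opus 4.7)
The plan is to derive Theorem \ref{char} directly from its ``non-equivariant'' analogue Theorem \ref{cluster ch} by pushing everything through the algebra homomorphism $\pi$ defined in (\ref{pi}). Recall that by definition $CC'(\underline X)=\pi(CC(\underline X))$, so the three statements of Theorem \ref{char} should be seen as the images under $\pi$ of the three statements of Theorem \ref{cluster ch}. The key fact I would use repeatedly is that $\pi$ is a $\mathbb Q$-algebra homomorphism (recorded in Theorem \ref{ho}), together with the explicit formulas $\pi(x_i)=x_{[i]}$ and $\pi(y_j)=y_{[j]}$.

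For part (1), I would simply compute $CC'(\underline{T_i})=\pi(CC(\underline{T_i}))=\pi(x_i)=x_{[i]}$, where the middle equality is Theorem \ref{cluster ch}(1). For part (2), I would write
\[
CC'(\underline X\oplus \underline X')=\pi(CC(\underline X\oplus \underline X'))=\pi(CC(\underline X)CC(\underline X'))=\pi(CC(\underline X))\pi(CC(\underline X'))=CC'(\underline X)CC'(\underline X'),
\]
using Theorem \ref{cluster ch}(2) in the second step and multiplicativity of $\pi$ in the third. Part (3) is analogous: starting from the exchange identity of Theorem \ref{cluster ch}(3) and applying $\pi$, which respects both products and sums, yields the desired identity
\[
CC'(\underline X)CC'(\underline Y)=CC'(\underline Z)+CC'(\underline Z').
\]

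There is essentially no obstacle here, since the genuine work has already been carried out in Theorem \ref{cluster ch}; the only point I would want to double-check is that the definition of $CC'$ via $\pi$ really coincides with the explicit expression given just before the theorem (involving $\widehat y_{[j]}$ and the pushed-forward index $ind'_{\underline{\mathcal T_0}}$). This amounts to verifying $\pi(\widetilde{\mathbf x}^{ind_{\underline{\mathcal T_0}}(\underline X)})=\mathbf x^{ind'_{\underline{\mathcal T_0}}(\underline X)}$ and $\pi(\widehat y_j)=\widehat y_{[j]}$, both of which are immediate from the definitions of $\lambda$ and $\pi$ and the fact that $b_{[i][j]}=\sum_{i'\in[i]}b_{i'j}$. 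Once these trivial compatibilities are noted, the three properties drop out formally as described above, so the proof reduces to applying $\pi$ term by term.
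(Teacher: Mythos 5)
Your proposal is correct and is exactly the paper's argument: the paper derives Theorem \ref{char} ``at once'' by applying the algebra homomorphism $\pi$ to the three identities of Theorem \ref{cluster ch}, which is precisely what you do (with the added, harmless, verification that $CC'=\pi\circ CC$ matches the displayed formula). No further comment is needed.
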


Following \cite{PP}, we say $\underline X\in \underline{\mathcal C^{\widetilde Q}}$ to be {\bf reachable} if it belongs to a cluster-tilting subcategory which can be obtained by a sequence of mutations from $\underline {\mathcal T_0}$ with the mutations do not take at the objects in $\underline{\mathcal T''}$. It is clear that any reachable object belongs to $\mathcal U$.

Following Theorem 4.1 of \cite{PP}, we have the following result:
\begin{theorem}\label{reach}
Keep the notations as above. Then the cluster character $CC'(\;\;)$ gives a surjection from the set of equivalence classes of indecomposable reachable objects under equivalent modulo $\Gamma$ of $\underline{\mathcal C^{\widetilde Q}}$ to the set of clusters variables of the cluster algebra $\mathcal A(\Sigma)$.
\end{theorem}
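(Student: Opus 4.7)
The plan is to reduce the statement to the analogous Palu--Plamondon surjection for the unfolded cluster algebra $\mathcal A(\widetilde\Sigma)$ and then descend through the algebra homomorphism $\pi$ of Theorem \ref{sur}. Two standing inputs I would invoke throughout are: Theorem 4.1 of \cite{PP}, valid in our setting because $\widetilde Q$ is strongly almost finite and $\underline{\mathcal C^{\widetilde Q}}$ is Hom-finite $2$-Calabi--Yau with cluster-tilting subcategory $\underline{\mathcal T_0}$, giving a surjection from indecomposable reachable objects of $\underline{\mathcal C^{\widetilde Q}}$ to cluster variables of $\mathcal A(\widetilde\Sigma)$ via $CC$; and the tautological identity $CC' = \pi \circ CC$.

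First I would verify that $CC'$ lands in the cluster variables of $\mathcal A(\Sigma)$. The natural argument is induction on the number of mutations needed to reach the cluster-tilting subcategory containing $\underline X$. The base case is Theorem \ref{char}(1), which gives $CC'(\underline{T_i}) = x_{[i]}$ for $\underline{T_i} \in \underline{\mathcal T'}$. For the inductive step, when $\underline X = \underline{T}_k^*$ arises by mutating a reachable subcategory at some $\underline{T}_k \notin \underline{\mathcal T''}$, Theorem \ref{char}(3) yields $CC'(\underline{T}_k)CC'(\underline{T}_k^*) = CC'(\underline Z) + CC'(\underline Z')$. I would then read off, using Lemma \ref{mut}, Lemma \ref{mutationorbit} and Theorem \ref{sur}, that this identity coincides with the exchange relation at the orbit $[k]$ in $\mathcal A(\Sigma)$, forcing $CC'(\underline{T}_k^*)$ to be a cluster variable.

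For surjectivity, let $x' \in \mathcal A(\Sigma)$ be a cluster variable and write $x' = \mu_{[i_s]}\cdots\mu_{[i_1]}(x_{[a]})$. By Theorem \ref{sur}, $x' = \pi(\widetilde\mu_{[i_s]}\cdots\widetilde\mu_{[i_1]}(x_a))$, whose argument is a cluster variable of $\mathcal A(\widetilde\Sigma)$ because each orbit mutation $\widetilde\mu_{[i_j]}$ factors as a well-defined composition of ordinary mutations at the vertices of $[i_j] \subset \widetilde Q_0 \setminus F$ (these commute thanks to the absence of $\Gamma$-loops and $\Gamma$-$2$-cycles). Palu--Plamondon then produces an indecomposable reachable $\underline X \in \underline{\mathcal C^{\widetilde Q}}$ with $CC(\underline X) = \widetilde\mu_{[i_s]}\cdots\widetilde\mu_{[i_1]}(x_a)$, and its witnessing mutation sequence avoids $\underline{\mathcal T''}$ because each $[i_j]$ lies outside $\overline F$; hence $CC'(\underline X) = \pi(CC(\underline X)) = x'$, as required.

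The hard part, as I see it, is ensuring that the categorical exchange relation used in the inductive step of the well-definedness really pushes forward to a genuine cluster-algebra exchange relation of $\mathcal A(\Sigma)$, rather than to some spurious Laurent identity. This comes down to checking that the Gabriel quiver of every reachable cluster-tilting subcategory of $\underline{\mathcal C^{\widetilde Q}}$ folds, via the $\Gamma$-action, onto the exchange matrix of the corresponding seed of $\mathcal A(\Sigma)$; this in turn is guaranteed by the $\Gamma$-equivariance of the construction of $\mathcal C^{\widetilde Q}$ in \cite{HL} together with Lemma \ref{mut}.
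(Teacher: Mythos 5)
Your proposal is correct and follows essentially the route the paper intends: the paper's own proof is a one-line deferral to Theorem 4.1 of \cite{PP} via Theorem \ref{char}, and your argument is that deferral made explicit, with the surjectivity half routed cleanly through the factorization $CC'=\pi\circ CC$ together with Theorem \ref{sur} and the Palu--Plamondon correspondence at the unfolded level. One point deserves care in your well-definedness step: Theorem \ref{sur} and Lemma \ref{mut} only control \emph{orbit} mutations, so the inductive claim that $\pi$ carries the categorical exchange relation of Theorem \ref{char}(3) to a genuine exchange relation of $\mathcal A(\Sigma)$ is valid only when the intermediate cluster-tilting subcategories are $\Gamma$-stable, i.e., when reachability is taken along orbit mutations $\widetilde\mu_{[k]}$ (replacing all $h\cdot\underline{T}_k$, $h\in\Gamma$, simultaneously); after a mutation at a single vertex of an orbit the Gabriel quiver is no longer $\Gamma$-stable and its folding is not the exchange matrix of any seed of $\mathcal A(\Sigma)$, so your appeal to ``$\Gamma$-equivariance together with Lemma \ref{mut}'' only covers the orbit-reachable case. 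This ambiguity is already present in the paper's definition of ``reachable'' and is not a defect you introduced; in particular it does not affect surjectivity, which is the assertion of the theorem and which your second paragraph establishes completely, since every cluster variable $\mu_{[i_s]}\cdots\mu_{[i_1]}(x_{[a]})$ of $\mathcal A(\Sigma)$ is realized as $\pi(CC(\underline X))$ for an orbit-reachable indecomposable $\underline X$.
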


\begin{proof}
By Theorem \ref{char}, the proof is similar as that of Theorem 4.1 in \cite{PP}.
\end{proof}

\section{Sign-coherence of ${\bf  g}$-vectors}\label{sign-coherence}

Keep the notations in Section \ref{3}. In this section, we will prove the sign-coherence of ${\bf g}$-vectors for acyclic sign-skew-symmetric cluster algebras. For convenience, suppose that $\mathcal A(\Sigma)$ is an acyclic sign-skew-symmetric cluster algebra with principal coefficients at $\Sigma$.

Since $B$ is acyclic, $\left(\begin{array}{c}
B  \\
I_n
\end{array}\right)$ is acyclic, too. We can construct an unfolding $(\widetilde Q, F, \Gamma)$ according to Theorem \ref{mainlemma}. It is easy to check that the corresponding seed $\widetilde \Sigma$ of $(\widetilde Q, F, \Gamma)$ is with principal coefficients, where $\widetilde \Sigma$ is the seed associate to $(Q,F)$.

\begin{proposition} Assume that $\underline X$ is an object of $\mathcal U$ given in Section 3.

(i) \label{gindex}
 $CC(\underline X)$ admits a ${\bf g}$-vector $(g_i)_{i\in \widetilde Q_0\setminus F}$ which is given by $g_i=[ind_{\underline{\mathcal T_0}}(\underline X): \underline {T_i}]$ for each $i$.

(ii) \label{g-vec}
 $CC'(\underline X)$ admits a ${\bf g}$-vector $(g_{[i]})_{[i]\in \overline{Q_0}\setminus \overline{F}}$ which is given by $g_{[i]}=\sum\limits_{i'\in [i]}[ind_{\underline{\mathcal T_0}}(\underline X): \underline {T_{i'}}]$ for each $i$.
\end{proposition}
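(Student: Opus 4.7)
The plan is to show that $CC(\underline X)$ is homogeneous under the natural $\mathbb Z^n$-grading on $\mathcal A(\widetilde\Sigma)$ with $n=|\widetilde Q_0\setminus F|$, read off its degree, and then push the statement forward along $\pi$ to obtain (ii).

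For part (i) the argument splits into two observations. First, for every $j\in\widetilde Q_0\setminus F$ the Laurent monomial $\widehat y_j$ is homogeneous of degree zero. Indeed, unpacking $\deg(\widehat y_j)=\sum_{k\notin F}b_{kj}e_k+\sum_{k'\in F}b_{k'j}\deg(y_{k'})$ and using that $\widetilde\Sigma$ may be assumed to carry principal coefficients (by Lemma~\ref{mut}), the frozen rows of the extended exchange matrix form an identity pattern along a canonical bijection $F\leftrightarrow\widetilde Q_0\setminus F$, so the two sums cancel. Hence the factor $\sum_{\mathbf a}\chi(Gr_{\mathbf a}(\cdots))\prod_j\widehat y_j^{a_j}$ has degree zero, and the degree of $CC(\underline X)$ agrees with that of the prefactor $\widetilde{\mathbf x}^{ind_{\underline{\mathcal T_0}}(\underline X)}$. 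Second, since $\mathcal C^{\widetilde Q}$ is Frobenius and the summands of $\mathcal T_0$ indexed by $F$ are precisely its projective-injective part, they are annihilated in $\underline{\mathcal C^{\widetilde Q}}$; hence $ind_{\underline{\mathcal T_0}}(\underline X)$ is supported on $\widetilde Q_0\setminus F$, giving $\widetilde{\mathbf x}^{ind_{\underline{\mathcal T_0}}(\underline X)}=\prod_{i\notin F}x_i^{g_i}$ with $g_i=[ind_{\underline{\mathcal T_0}}(\underline X):\underline{T_i}]$, whose degree is exactly $(g_i)_{i\in\widetilde Q_0\setminus F}$. This proves (i).

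For (ii), I would simply note that $CC'(\underline X)=\pi(CC(\underline X))$ by definition of the $\Gamma$-equivariant cluster character, and $\pi$ is $\mathbb Z^n$-graded along $\lambda:e_i\mapsto e_{[i]}$ by Theorem~\ref{ho}. Applying $\lambda$ to the ${\bf g}$-vector $(g_i)_{i\notin F}$ of $CC(\underline X)$ obtained in (i) yields $g_{[i]}=\sum_{i'\in[i]}g_{i'}$, which is exactly the claimed formula.

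The one delicate step is the degree-zero computation for $\widehat y_j$: making the cancellation rigorous requires careful bookkeeping of how the principal-coefficient structure on $\Sigma$ lifts to $\widetilde\Sigma$ through the unfolding, in particular pinning down which vertices of $F$ pair up with which vertices of $\widetilde Q_0\setminus F$ to produce the identity rows of the extended exchange matrix. Once this labelling issue is settled, both parts of the proposition reduce to the grading arguments sketched above.
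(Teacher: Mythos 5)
Your overall strategy --- show each $\widehat y_j$ is homogeneous of degree zero, read the degree of $CC(\underline X)$ off the monomial prefactor, and push (i) forward along the $\mathbb Z^n$-graded map $\pi$ of Theorem \ref{ho} to get (ii) --- is essentially the argument the paper relies on: its proof of (i) is a one-line reference to (\cite{PP1}, Proposition 3.6), whose proof is exactly this grading computation, and (ii) is declared immediate from (i). Your degree-zero computation for $\widehat y_j$ is correct once the principal-coefficient structure of $\widetilde\Sigma$ is granted, which the paper asserts at the start of Section 4; that is not the delicate point.

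The genuine gap is in your second observation. You claim that the summands of $\mathcal T_0$ indexed by $F$ are the projective-injectives of $\mathcal C^{\widetilde Q}$ and hence vanish in $\underline{\mathcal C^{\widetilde Q}}$, so that $ind_{\underline{\mathcal T_0}}(\underline X)$ is supported on $\widetilde Q_0\setminus F$. This contradicts the paper's setup: the Gabriel quiver of $\underline{\mathcal T_0}$ is all of $\widetilde Q$, frozen vertices included, and $\underline{\mathcal T''}$ is a \emph{nonzero} subcategory of the stable category (otherwise the condition $Hom_{\underline{\mathcal C^{\widetilde Q}}}(\underline T[-1],\underline X)=0$ defining $\mathcal U$, and the requirement that mutations avoid $\underline{\mathcal T''}$, would be vacuous). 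Hence $ind_{\underline{\mathcal T_0}}(\underline X)\in K_0(\underline{\mathcal T_0})\cong\mathbb Z^{|\widetilde Q_0|}$ genuinely carries frozen coordinates, and the prefactor $\widetilde{\mathbf x}^{\,ind_{\underline{\mathcal T_0}}(\underline X)}$ contains factors $y_j^{[ind_{\underline{\mathcal T_0}}(\underline X):\underline T_j]}$ for $j\in F$, each of degree equal to $-[ind_{\underline{\mathcal T_0}}(\underline X):\underline T_j]$ times the corresponding column of the exchange matrix, which is nonzero in general. So your computation does not yet give $\deg CC(\underline X)=(g_i)_{i\notin F}$. To close the gap you must either prove that $[ind_{\underline{\mathcal T_0}}(\underline X):\underline T_j]=0$ for all $j\in F$ for the objects to which the proposition is applied --- note that the rigidity condition $Ext^1_{\underline{\mathcal C^{\widetilde Q}}}(\underline T,\underline X)=0$ for $\underline T\in\underline{\mathcal T''}$ rules out frozen summands of $\underline T_1$ but not of $\underline T_0$, and indeed $CC(\underline T_j)=y_j$ for frozen $j$ is a degree-nonzero example --- or else explicitly account for the degree contributed by these frozen factors. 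This, rather than the labelling of the identity rows that you flagged, is the step that requires care.
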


\begin{proof} (i)
Since the quiver $\widetilde Q$ is strongly almost finite, the proof is the same one as that of (\cite{PP1}, Proposition 3.6) using of the local finiteness of $\widetilde Q$.

(ii) is obtained immediately from (i).
\end{proof}

 Let $h\in \Gamma$ be either of finite order or without fixed points. Define  $\underline{\mathcal C^{\widetilde Q}}_h$ the $K$-linear category whose objects  are the same as that of $\underline{\mathcal C^{\widetilde Q}}$, and whose morphisms consist of  $Hom_{\underline{\mathcal C^{\widetilde Q}}_h}(\underline{X},\underline{Y})=\bigoplus\limits_{h'\in \Gamma'}Hom_{\mathcal C^{\widetilde Q}}(h'\cdot \underline{X},\underline{Y})$ for all objects $\underline{X},\underline{Y}$. We view this category as a dual construction of the category $\mathcal C^Q_h$ in \cite{HL}.

 Denote by $\underline{\mathcal C^{\widetilde Q}}_h(\underline{\mathcal T_0})$ the subcategory of $\underline{\mathcal C^{\widetilde Q}}_h$ consisting of all objects $\underline{T}\in \underline{\mathcal T_0}$.

\begin{lemma}\label{Hom-finite} The category  $\underline{\mathcal C^{\widetilde Q}}_h$ is $Hom$-finite if either (i)~ the order of $h$ is finite, or (ii) ~$Q$ has no fixed points  under the action of $h\in \Gamma$.
\end{lemma}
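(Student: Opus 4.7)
The plan is to show directly that for fixed objects $\underline X, \underline Y$, the direct sum
$$Hom_{\underline{\mathcal C^{\widetilde Q}}_h}(\underline X,\underline Y)=\bigoplus_{h'\in \Gamma'} Hom_{\mathcal C^{\widetilde Q}}(h'\cdot \underline X,\underline Y)$$
is finite-dimensional, where $\Gamma'=\langle h\rangle$. By additivity it suffices to take $\underline X$ and $\underline Y$ indecomposable, and then the task splits into (a) each individual summand is finite-dimensional, and (b) only finitely many summands are nonzero.

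Claim (a) is immediate in both cases: since $\mathcal C^{\widetilde Q}$ is a Hom-finite Frobenius $2$-Calabi-Yau category, the stable category $\underline{\mathcal C^{\widetilde Q}}$ is Hom-finite as well, so each $Hom_{\mathcal C^{\widetilde Q}}(h'\cdot \underline X,\underline Y)$ has finite $K$-dimension. For case (i), when $|h|=m<\infty$, the group $\Gamma'$ has exactly $m$ elements, so the sum already has finitely many terms and (b) is trivial. This disposes of case (i) completely.

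The substantive work is case (ii). Here the argument relies on the structural fact, recalled above, that $\widetilde Q$ is \emph{strongly almost finite}, i.e.\ locally finite and containing no path of infinite length, and that the Gabriel quiver of the cluster-tilting subcategory $\underline{\mathcal T_0}$ is isomorphic to $\widetilde Q$. An indecomposable $\underline X\in \mathcal U$ admits a triangle $\underline T_1\to \underline T_0\to \underline X\to \underline T_1[1]$ with $\underline T_0,\underline T_1\in \underline{\mathcal T_0}$, and by local finiteness each $\underline T_j$ is supported on a finite set $S_j\subset \widetilde Q_0$. The $\Gamma$-action on $\underline{\mathcal C^{\widetilde Q}}$ is compatible with the action on $\widetilde Q_0$, so $h'\cdot \underline X$ is built from pieces supported on $h'\cdot S_j$. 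One then observes that for any two finite subsets $S,T\subset \widetilde Q_0$, there are only finitely many $h'\in\Gamma'$ such that there is a path of $\widetilde Q$ connecting $h'\cdot S$ to $T$; this is because local finiteness bounds the degrees of vertices and the absence of infinite paths prevents an infinite reservoir of such $h'$ from existing once the translates $h'\cdot S$ drift away from $T$ (which they must, since $h$ has no fixed points and $\Gamma'$ acts freely on $\widetilde Q_0$). Composing this combinatorial vanishing with the standard description of morphisms in $\underline{\mathcal C^{\widetilde Q}}$ through the cluster-tilting subcategory $\underline{\mathcal T_0}$ yields $Hom_{\mathcal C^{\widetilde Q}}(h'\cdot \underline X,\underline Y)=0$ for all but finitely many $h'$, giving (b).

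The main obstacle is precisely the free-action step in case (ii): making rigorous the statement that translates $h'\cdot S$ eventually lie too far from $T$ for any path of $\widetilde Q$ to connect them, and transferring this from $\underline{\mathcal T_0}$-level vanishing to vanishing of Hom in $\underline{\mathcal C^{\widetilde Q}}$ between the arbitrary rigid objects $h'\cdot \underline X$ and $\underline Y$. This is where we must use both properties of the strongly almost finite quiver $\widetilde Q$—local finiteness by itself is not enough, nor is absence of infinite paths alone—and where the hypothesis of no fixed points is essential to keep the orbit $\{h'\cdot S\}_{h'\in \Gamma'}$ from accumulating near $T$.
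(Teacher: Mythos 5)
Your overall plan---each summand is finite-dimensional because $\mathcal C^{\widetilde Q}$ is $Hom$-finite, case (i) is then trivial, and in case (ii) one shows that only finitely many summands are nonzero using the strongly almost finite property of $\widetilde Q$---is the right shape, and it is consistent with what the paper does (the paper gives no argument of its own, deferring entirely to Lemma 6.1 of [HL]). Case (i) is complete as you state it. But case (ii), which is the entire content of the lemma, rests on two steps that you yourself label as ``the main obstacle'' and then do not close; naming the obstacle is not overcoming it, so what you have is a plan rather than a proof.

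Concretely: (1) You assert that $\Gamma'=\langle h\rangle$ acts freely on $\widetilde Q_0$ because $h$ has no fixed points. For a general group action this is false: $h$ can have no fixed points while $h^2$ does (for instance if $h$ restricts to an involution on part of $\widetilde Q_0$ while having infinite order), and then the translates $h'^k\cdot S$ return to $T$ infinitely often and your finiteness claim collapses. One needs the fact, special to the action constructed in [HL] and invoked elsewhere in this paper via their Lemmas 2.3--2.4 (see the proof of Lemma \ref{distinct}), that fixed points of powers of $h$ are already fixed points of $h$; this must be cited, as it is not a formal consequence of the hypothesis. (2) More seriously, the implication ``no path of $\widetilde Q$ joins $h'\cdot S$ to $T$'' $\Rightarrow$ ``$Hom_{\mathcal C^{\widetilde Q}}(h'\cdot\underline X,\underline Y)=0$'' is never established. $\mathcal C^{\widetilde Q}$ is not the category of representations of $\widetilde Q$: it is a Frobenius $2$-Calabi--Yau category built in [HL] over an algebra whose quiver is a doubled/enlarged version of $\widetilde Q$, and $2$-Calabi--Yau duality lets morphisms ``point both ways,'' so paths of $\widetilde Q$ itself are not obviously the right combinatorial control; the vanishing has to be argued from the actual construction, e.g.\ by bounding $Hom(h'\cdot\underline T,\underline T')$ for $\underline T,\underline T'\in\underline{\mathcal T_0}$ and propagating through the $\underline{\mathcal T_0}$-resolution triangles. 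By contrast, the purely combinatorial step you worry about is actually the easy one and can be made rigorous: in a strongly almost finite quiver the set of vertices admitting a path to or from a fixed finite set is finite by K\"onig's lemma (the tree of paths ending at a given vertex is finitely branching with no infinite branch), so a free $\langle h\rangle$-orbit meets it only finitely often.
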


\begin{proof}
The proof is similar to that of Lemma 6.1 in \cite{HL}.
\end{proof}

In the sequel, assume that $\underline{\mathcal C^{\widetilde Q}}_h(\underline{\mathcal T_0})$ is $Hom$-finite. Let $F:\underline{\mathcal C^{\widetilde Q}}\rightarrow mod\underline{\mathcal C^{\widetilde Q}}_h(\underline{\mathcal T_0})^{op}$ be the functor mapping an object $X$ to the restricting of $\underline{\mathcal C^{\widetilde Q}}_h(-,X)$ to $\underline{\mathcal C^{\widetilde Q}}_h(\underline{\mathcal T_0})$. For each indecomposable object $\underline{T}$ of $\underline{\mathcal T_0}$, denote by $S_{\underline T}$  the {\bf simple quotient of $F(\underline T)$} via $S_{\underline T}(\underline T')=End_{\mathcal C}(\underline T)/J$ if $\underline T'\cong \underline T$ and $S_{\underline T}(\underline T')=0$ if $\underline T'\not\cong \underline T$, for any object $\underline T'$ of $\underline{\mathcal C^{\widetilde Q}}_h$, where $J$ is the Jacobson radical of $End_{\underline{\mathcal C^{\widetilde Q}}_h}(\underline T)$.

\begin{lemma}\label{lift}
Keep the notations as above. For any morphism $\widetilde f:F(\underline M)\rightarrow F(\underline N)$ in $mod\underline{\mathcal C^{\widetilde Q}}_h(\underline{\mathcal T_0})^{op}$ with $M,N\in \mathcal C^{\widetilde Q}$, there exists $f: \underline M\rightarrow \underline N$ in $\underline{\mathcal C^{\widetilde Q}}_h$ such that $F(f)=\widetilde f$.
\end{lemma}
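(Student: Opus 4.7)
The plan is to adapt the standard Yoneda-lifting technique for cluster-tilting subcategories to the orbit-like setting of $\underline{\mathcal C^{\widetilde Q}}_h$. The key inputs are the cluster-tilting property of $\underline{\mathcal T_0}$ in $\underline{\mathcal C^{\widetilde Q}}$ (Lemma \ref{extpre}), the $\Gamma$-stability of $\underline{\mathcal T_0}$, Yoneda's lemma applied to representable objects, and the standard cone-factorisation in a triangulated category.

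First I would produce projective presentations of $F(\underline M)$ and $F(\underline N)$ in $mod\,\underline{\mathcal C^{\widetilde Q}}_h(\underline{\mathcal T_0})^{op}$. For $X\in\{M,N\}$, cluster-tilting gives a triangle $\underline T_1^X\to \underline T_0^X\xrightarrow{\pi_X}\underline X\to \underline T_1^X[1]$ in $\underline{\mathcal C^{\widetilde Q}}$ with $\underline T_i^X\in\underline{\mathcal T_0}$ and $\pi_X$ a right $\underline{\mathcal T_0}$-approximation. Using the $\Gamma$-stability of $\underline{\mathcal T_0}$, each morphism $\underline T\to\underline X$ in $\underline{\mathcal C^{\widetilde Q}}_h$ with $\underline T\in\underline{\mathcal T_0}$ decomposes along the direct-sum definition of the Hom-space into pieces that each factor through $\pi_X$. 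Hence applying $F$ yields an exact sequence $F(\underline T_1^X)\to F(\underline T_0^X)\xrightarrow{\epsilon_X} F(\underline X)\to 0$, in which the first two terms are representable and therefore projective in the module category.

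Next, I set $\psi:=\widetilde f\circ\epsilon_M:F(\underline T_0^M)\to F(\underline N)$; by Yoneda applied to the representable $F(\underline T_0^M)$, this corresponds to a morphism $\phi:\underline T_0^M\to\underline N$ in $\underline{\mathcal C^{\widetilde Q}}_h$. The composite $\phi\circ\alpha$, where $\alpha:\underline T_1^M\to\underline T_0^M$ is the map in the triangle, vanishes after applying $F$ because $\epsilon_M$ kills the image of $F(\alpha)$; Yoneda applied to $F(\underline T_1^M)$ then lifts this back to $\phi\circ\alpha=0$ already in $\underline{\mathcal C^{\widetilde Q}}_h$. Decomposing $\phi=\sum_{h'\in\Gamma'}\phi_{h'}$ with $\phi_{h'}\in\mathrm{Hom}_{\underline{\mathcal C^{\widetilde Q}}}(h'\cdot\underline T_0^M,\underline N)$, the composition rule in the orbit-type category forces $\phi_{h'}\circ(h'\cdot\alpha)=0$ in $\underline{\mathcal C^{\widetilde Q}}$ for each $h'$. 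Because $\Gamma$ acts on $\underline{\mathcal C^{\widetilde Q}}$ by triangulated autoequivalences, each shifted sequence $h'\cdot(\underline T_1^M\to\underline T_0^M\to\underline M\to\underline T_1^M[1])$ is again a triangle, and the standard cone factorisation produces $f_{h'}:h'\cdot\underline M\to\underline N$ with $\phi_{h'}=f_{h'}\circ(h'\cdot\pi_M)$. The $Hom$-finiteness hypothesis of Lemma \ref{Hom-finite} makes only finitely many components nonzero, so $f:=\sum_{h'}f_{h'}$ is a well-defined morphism $\underline M\to\underline N$ in $\underline{\mathcal C^{\widetilde Q}}_h$ satisfying $f\circ\pi_M=\phi$. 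Applying $F$ gives $F(f)\circ\epsilon_M=\psi=\widetilde f\circ\epsilon_M$, and epimorphicity of $\epsilon_M$ yields $F(f)=\widetilde f$.

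The main obstacle I expect is the transfer of the triangulated cone factorisation from $\underline{\mathcal C^{\widetilde Q}}$ (genuinely triangulated) to $\underline{\mathcal C^{\widetilde Q}}_h$ (an orbit-type category, not triangulated in the naive sense). The proposed resolution is to work summand-by-summand along the direct-sum decomposition of Hom-spaces, exploiting that $\Gamma$ acts exactly so that each $h'$-translate of the approximation triangle remains a triangle, and that the $Hom$-finiteness assumption makes all the sums that appear finite. A secondary sanity check is that the approximation triangle really does yield an exact projective presentation over $\underline{\mathcal C^{\widetilde Q}}_h(\underline{\mathcal T_0})^{op}$ rather than just over $\underline{\mathcal T_0}^{op}$; that is precisely where the $\Gamma$-stability of $\underline{\mathcal T_0}$ enters essentially.
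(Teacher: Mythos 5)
Your argument is correct and follows essentially the same route as the paper's: construct projective presentations of $F(\underline M)$ and $F(\underline N)$ from the minimal right $\underline{\mathcal T_0}$-approximation triangles (exactness resting on the $\Gamma$-stability of $\underline{\mathcal T_0}$), apply the Yoneda lemma to pass between maps of representables and morphisms in $\underline{\mathcal C^{\widetilde Q}}_h$, and then use the triangulated structure of $\underline{\mathcal C^{\widetilde Q}}$ componentwise in $h'$, with the direct-sum definition of the Hom-spaces keeping all sums finite. The only real difference is organizational: you compose $\widetilde f$ with the cover $\epsilon_M$ and factor the resulting map through the cone of each $h'\cdot\alpha$, so that only the presentation of $F(\underline M)$ is actually used, whereas the paper lifts $\widetilde f$ to a chain map between both presentations and then completes a morphism of triangles.
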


\begin{proof}
Let $\underline{T}_1\overset{e}{\rightarrow} \underline{T}_0\overset{d}{\rightarrow}\underline{M}\rightarrow \underline{T}_1[1]$ be the triangle such that $d$ is a minimal right $\underline{\mathcal T_0}$-approximation. Since $\underline{\mathcal T_0}$ is a cluster tilting subcategory of $\underline{\mathcal C^{\widetilde Q}}$, we have $\underline{T}_1\in \underline{\mathcal T_0}$. Applying $F$ to the above triangle, we have $F(\underline{T}_1)\rightarrow F(\underline{T}_0)\rightarrow F(\underline{M})\rightarrow 0$. Similarly, there is a triangle $\underline{T'}_1\overset{e'}{\rightarrow} \underline{T'}_0\overset{d'}{\rightarrow}\underline{N}\rightarrow \underline{T'}_1[1]$, and $F(\underline{T'}_1)\rightarrow F(\underline{T'}_0)\rightarrow F(\underline{N})\rightarrow 0$. Since $\underline{T}_0,\underline{T'}_0,\underline{T}_1,\underline{T'}_1\in \underline{\mathcal T_0}$, it follows that  $F(\underline{T}_0),F(\underline{T'}_0),F(\underline{T}_1),F(\underline{T'}_1)$ are projective in $mod\underline{\mathcal C^{\widetilde Q}}_h(\underline{\mathcal T_0})^{op}$. Thus, $\widetilde f$ can be lift to the following commutative diagram:
$$\xymatrix{
F(\underline{T}_1)\ar[r]\ar[d]^{\widetilde f_1}   & F(\underline{T}_0)\ar[r] \ar[d]^{\widetilde f_0}     & F(\underline{M})\ar[r]\ar[d]_{\widetilde f}& 0 \\
F(\underline{T'}_1)\ar[r]                & F(\underline{T'}_0)\ar[r]      & F(\underline{N}) \ar[r] &0,}$$

By the Yoneda Lemma, there exist $f_1:\underline{T}_1\rightarrow \underline{T'}_1$ and $f_0:\underline{T}_0\rightarrow \underline{T'}_0$ in $\underline{\mathcal C^{\widetilde Q}}_h$ such that $F(f_1)=\widetilde f_1$ and $F(f_0)=\widetilde f_0$. Thus, $f_1\in\bigoplus\limits_{h'\in \Gamma'}Hom_{\underline{\mathcal C^{\widetilde Q}}}(h'\cdot \underline {T}_1,\underline {T'}_1)$ and $f_0\in\bigoplus\limits_{h'\in \Gamma'}Hom_{\underline{\mathcal C^{\widetilde Q}}}(h'\cdot \underline {T}_0,\underline {T'}_0)$ such that $$\xymatrix{
\bigoplus\limits_{h'\in \Gamma'}h'\cdot \underline{T}_1\ar[r]\ar[d]^{f_1}   & \bigoplus\limits_{h'\in \Gamma'}h'\cdot \underline{T}_0\ar[r] \ar[d]^{ f_0}     & \bigoplus\limits_{h'\in \Gamma'}h'\cdot \underline{M}\ar[r]\ar[d]& 0 \\
\underline{T'}_1\ar[r]                & \underline{T'}_0\ar[r]      & \underline{N} \ar[r] &0,}$$ commutes. Since $\underline{\mathcal C^{\widetilde Q}}_h(\underline{\mathcal T_0})$ is $Hom$-finite, we may assume that only finite $h'\in \Gamma'$ appear in the upper triangle, which means that there exists a finite subset $I$ of $\Gamma'$ such that $$\xymatrix{
\bigoplus\limits_{h'\in I}h'\cdot \underline{T}_1\ar[r]\ar[d]^{f_1}   & \bigoplus\limits_{h'\in I}h'\cdot \underline{T}_0\ar[r] \ar[d]^{ f_0}     & \bigoplus\limits_{h'\in I}h'\cdot \underline{M}\ar[r]\ar[d]& 0 \\
\underline{T'}_1\ar[r]                & \underline{T'}_0\ar[r]      & \underline{N} \ar[r] &0,}$$ commutes. Thus, there exists $f\in\bigoplus\limits_{h'\in I}Hom_{\underline{\mathcal C^{\widetilde Q}}}(h'\cdot \underline {T}_0,\underline {T'}_0)\subseteq \bigoplus\limits_{h'\in \Gamma'}Hom_{\underline{\mathcal C^{\widetilde Q}}}(h'\cdot \underline {T}_0,\underline {T'}_0)$ such that the above diagram commutes. This commutative diagram induces
$$\xymatrix{
F(\underline{T}_1)\ar[r]\ar[d]^{\widetilde f_1}   & F(\underline{T}_0)\ar[r] \ar[d]^{\widetilde f_0}     & F(\underline{M})\ar[r]\ar[d]_{F(f)}& 0 \\
F(\underline{T'}_1)\ar[r]                & F(\underline{T'}_0)\ar[r]      & F(\underline{N}) \ar[r] &0.}$$
Therefore, we have $F(f)=\widetilde f$.
\end{proof}

\begin{lemma}\label{minimalpro}
Assume that for $\underline X\in \mathcal U$, the category $add(\{h\cdot \underline{X}\;|\;h\in \Gamma\})$ is rigid. Let $\underline{T}_1\overset{f'}{\rightarrow} \underline{T}_0\overset{f}{\rightarrow}\underline{X}\rightarrow \underline{T}_1[1]$ be a triangle in $\underline{\mathcal C^{\widetilde Q}}$ with $f$ a minimal right $\underline{\mathcal T_0}$-approximation. If $\underline X$ has not any  direct summand in $\underline{\mathcal T_0}[1]$, then $$F(\underline{T}_1)\overset{F(f')}{\longrightarrow} F(\underline{T}_0)\overset{F(f)}{\longrightarrow}F(\underline{X})\rightarrow 0$$ is a minimal projective resolution of $F(\underline X)$.
\end{lemma}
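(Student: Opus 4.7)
The plan is to check the three conditions characterizing a minimal projective resolution: exactness of $F(\underline T_1) \to F(\underline T_0) \to F(\underline X) \to 0$, projectivity of the first two terms, and minimality of both $F(f)$ (as a projective cover of $F(\underline X)$) and the induced map $F(\underline T_1) \twoheadrightarrow \ker F(f)$. Projectivity is immediate from the Yoneda lemma, since $F(\underline T_0)$ and $F(\underline T_1)$ are representable on $\underline{\mathcal C^{\widetilde Q}}_h(\underline{\mathcal T_0})$. For exactness, I would use that the $\Gamma$-action preserves $\underline{\mathcal T_0}$ by exact functors, so for any $\underline T \in \underline{\mathcal T_0}$ and $h' \in \Gamma'$ one has $h' \cdot \underline T \in \underline{\mathcal T_0}$ and hence $\mathrm{Ext}^1_{\underline{\mathcal C^{\widetilde Q}}}(h' \cdot \underline T, \underline T_1) = 0$ by rigidity of the cluster-tilting subcategory. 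Applying $\mathrm{Hom}_{\underline{\mathcal C^{\widetilde Q}}}(h' \cdot \underline T, -)$ to the given triangle and summing over $h' \in \Gamma'$ yields pointwise exactness.

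For the minimality of $F(f)$, I would take any $g \in \mathrm{End}(F(\underline T_0))$ with $F(f) \circ g = F(f)$. By Lemma \ref{lift} together with Yoneda on $\underline{\mathcal C^{\widetilde Q}}_h(\underline{\mathcal T_0})$, $g$ lifts to $\tilde g \in \mathrm{End}_{\underline{\mathcal C^{\widetilde Q}}_h}(\underline T_0)$ satisfying $f \circ \tilde g = f$ in $\underline{\mathcal C^{\widetilde Q}}_h$. Writing $\tilde g = g_e + \sum_{h' \neq e} g_{h'}$ componentwise, the $e$-component gives $f \circ g_e = f$ in $\underline{\mathcal C^{\widetilde Q}}$; minimality of $f$ as a right $\underline{\mathcal T_0}$-approximation forces $g_e$ to be an automorphism. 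Under the hypothesis that $h$ has finite order or acts without fixed points, the non-identity orbit components of $\tilde g$ lie in the Jacobson radical of $\mathrm{End}_{\underline{\mathcal C^{\widetilde Q}}_h}(\underline T_0)$, so $\tilde g$ is invertible and $g$ is an isomorphism. For the minimality of $F(f')$ as a projective cover of $\ker F(f)$, the no-summand hypothesis on $\underline X$ together with rigidity of $\mathrm{add}\{h \cdot \underline X \mid h \in \Gamma\}$ is crucial: a direct summand $\underline T_1''$ of $\underline T_1$ with $f'|_{\underline T_1''} = 0$ would split the triangle and exhibit $\underline T_1''[1]$ as a summand of $\underline X$, contradicting the hypothesis. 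A parallel lifting argument via Lemma \ref{lift} then yields that $F(f')$ is a projective cover of $\ker F(f)$.

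The main obstacle is the orbit-category radical analysis in the third step: verifying that the non-identity components $\{g_{h'}\}_{h' \neq e}$ of the lifted endomorphism actually lie in $\mathrm{rad}\,\mathrm{End}_{\underline{\mathcal C^{\widetilde Q}}_h}(\underline T_0)$. This genuinely uses the Krull-Schmidt structure of $\underline{\mathcal T_0}$, the Hom-finiteness of Lemma \ref{Hom-finite}, and a case analysis based on either the finite-order condition on $h$ or the freeness of its action on $Q$.
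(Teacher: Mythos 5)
Your proposal follows essentially the same route as the paper: lift the endomorphism to $\underline{\mathcal C^{\widetilde Q}}_h$ via Lemma \ref{lift} and Yoneda, decompose it into $\Gamma'$-components, show the identity component is an automorphism by right minimality of $f$ and the remaining components lie in the radical, and conclude invertibility (the paper phrases the projective-cover property as ``$F(\underline T_0)$ splits off any projective surjecting onto $F(\underline X)$'' rather than via right minimality of $F(f)$, but the computation is identical); your treatment of $f'$ via the no-summand-in-$\underline{\mathcal T_0}[1]$ hypothesis also matches the paper. The one step you flag as an unresolved obstacle is in fact closed by an argument you already have the pieces for: since $f$ sits in the $e$-component, the relation $f\circ\tilde g=f$ in $\underline{\mathcal C^{\widetilde Q}}_h$ gives not only $f\circ g_e=f$ but also $f\circ g_{h'}=0$ for every $h'\neq e$, and if some $g_{h'}:h'\cdot\underline T_0\to\underline T_0$ were not radical it would (by Krull--Schmidt) restrict to an isomorphism onto a nonzero direct summand of $\underline T_0$ on which $f$ would then vanish, contradicting right minimality of $f$. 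So the radical membership comes from minimality of $f$, not from the finite-order/fixed-point-free hypothesis on $h$; that hypothesis is needed only to make $\underline{\mathcal C^{\widetilde Q}}_h$ Hom-finite (Lemma \ref{Hom-finite}) so that ``identity component invertible, other components radical'' implies the whole element is invertible (Lemma 5.10 of \cite{HL}, which the paper invokes at exactly this point).
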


\begin{proof}
Since $\underline X$ does not have direct summand in $\underline{\mathcal T_0}[1]$, $f'$ is right minimal. Otherwise, if $f'$ is not right minimal, then $f'$ has a direct summand as $\underline{T'}\rightarrow 0$, thus $\underline{T'}[1]$ is a direct summand of $\underline X$. This is a contradiction.

First, we prove that $F(f)$ is a projective cover of $F(\underline X)$. For any projective representation $F(\underline T)e$ of $\underline{\mathcal C^{\widetilde Q}}_h$ and surjective morphism $u:F(\underline T)e\rightarrow F(\underline X)\rightarrow 0$, where $\underline T\in \underline{\mathcal C^{\widetilde Q}}_h$, $e\in End_{\underline{\mathcal C^{\widetilde Q}}_h}(T)$ is an idempotent. By Yoneda Lemma, there exists $g\in Hom_{\underline{\mathcal C^{\widetilde Q}}_h}(\underline T,\underline X)$ such that $F(g)=u$. Since $F(\underline T)e$ and $F(\underline{T}_0)$ are projective, there exist $v: F(\underline{T}_0)\rightarrow F(\underline T)e$ and $w: F(\underline{T})\rightarrow F(\underline{T}_0)$ such that $F(f)=F(g)v$ and $F(g)=F(f)w$. Thus, $F(f)=F(f)wv$. Similarly, by Yoneda Lemma, there exist $g'\in Hom_{\underline{\mathcal C^{\widetilde Q}}_h}(\underline{T}_0,\underline T)$ and $g''\in Hom_{\underline{\mathcal C^{\widetilde Q}}_h}(\underline{T},\underline{T}_0)$ such that $F(g')=v$, $F(g'')=w$. Thus, $F(f)=F(f)wv$ is equivalent to $f=f\circ g''\circ g'$. We may assume $g''\circ g'=(g_{h'})_{h'\in \Gamma'}$, then $f=f\circ(g''\circ g')=(fg_{h'})_{h'\in \Gamma}$. Thus $f=fg_{e}$ and $0=fg_{h'}=0$ for any $h'\neq e$, where $e$ is the identity of $\Gamma'$. Further, since $f$ is a right minimal $\underline{\mathcal T_0}$-approximation and $h'\underline{T}_0\in \underline{\mathcal T_0}$~. Therefore, for any $e\neq h'\in \Gamma$, $g_{h'}\in J(h'\underline{T}_0,\underline{T}_0)$ and $g_e$ is an isomorphism. Using Lemma 5.10 of \cite{HL}, $g''\circ g'=(g_{h'})_{h'\in \Gamma'}$ is an isomorphism. Thus, $F(\underline{T}_0)$ is a direct summand of $F(\underline T)e$.

Similarly, because $f'$ is right minimal, $F(f')$ induces a projective cover of $ker(F(f))$. Our result follows.
\end{proof}

 Inspired by (Proposition 2.1, \cite{DK}), (Lemma 3.58, \cite{D}) and (Lemma 3.5, \cite{PP1}), we have:

\begin{lemma}\label{distinct}
Assume $\underline X\in \mathcal U$ such that $add(\{h\cdot \underline{X}\;|\;h\in \Gamma\})$ is rigid, and $\underline{T}_1\overset{f'}{\rightarrow} \underline{T}_0\overset{f}{\rightarrow}\underline{X}\rightarrow \underline{T}_1[1]$ is a triangle in $\underline{\mathcal C^{\widetilde Q}}$ and $f$ is a minimal right $\underline{\mathcal T}$-approximation. If $\underline{T}$ is a direct summand of $\underline{T}_0$ for indecomposable object $\underline T\in \underline{\mathcal C^{\widetilde Q}}$, then $h\cdot \underline{T}$ is not a direct summand of $\underline{T}_1$ for any $h\in \Gamma$.
\end{lemma}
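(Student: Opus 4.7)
The plan is to pass to the module category $mod\,\underline{\mathcal C^{\widetilde Q}}_h(\underline{\mathcal T_0})^{op}$ via the functor $F$ and exploit rigidity of $F(\underline X)$ there. The key preliminary observation is that $F(\underline T) \cong F(h \cdot \underline T)$ as objects of this module category for every $\underline T \in \underline{\mathcal T_0}$ and $h \in \Gamma$: since $\Gamma$ acts on $\underline{\mathcal C^{\widetilde Q}}$ by autoequivalences and morphisms in $\underline{\mathcal C^{\widetilde Q}}_h$ are defined as the orbit sums $\bigoplus_{h' \in \Gamma} Hom_{\underline{\mathcal C^{\widetilde Q}}}(h' \cdot -, -)$, one can reindex $h' \mapsto h h'$ to obtain a natural isomorphism. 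Consequently, if $\underline T$ appears as a direct summand of $\underline{T_0}$ and $h \cdot \underline T$ appears as a direct summand of $\underline{T_1}$, then the indecomposable projective $F(\underline T) \cong F(h \cdot \underline T)$ would be a common direct summand of $F(\underline{T_0})$ and $F(\underline{T_1})$.

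After possibly splitting off $\underline{\mathcal T_0}[1]$-summands of $\underline X$ (which contribute trivially to $\underline{T_0}$ and can be analyzed directly on the $\underline{T_1}$ side), I reduce to the case that $\underline X$ has no summand in $\underline{\mathcal T_0}[1]$. Lemma \ref{minimalpro} then yields the minimal projective presentation
\[F(\underline{T_1}) \xrightarrow{F(f')} F(\underline{T_0}) \xrightarrow{F(f)} F(\underline X) \to 0.\]
The next step is to verify that $F(\underline X)$ is rigid, that is, $Ext^1(F(\underline X), F(\underline X)) = 0$. Applying $Hom_{\underline{\mathcal C^{\widetilde Q}}_h}(-, \underline X)$ to the approximation triangle $\underline{T_1} \to \underline{T_0} \to \underline X \to \underline{T_1}[1]$ produces the exact sequence
\[Hom_{\underline{\mathcal C^{\widetilde Q}}_h}(\underline{T_0}, \underline X) \to Hom_{\underline{\mathcal C^{\widetilde Q}}_h}(\underline{T_1}, \underline X) \to Hom_{\underline{\mathcal C^{\widetilde Q}}_h}(\underline X, \underline X[1]),\]
whose third term equals $\bigoplus_{h' \in \Gamma} Ext^1_{\underline{\mathcal C^{\widetilde Q}}}(h' \cdot \underline X, \underline X)$ and therefore vanishes by the hypothesis that $add(\{h \cdot \underline X \;|\; h \in \Gamma\})$ is rigid. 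Identifying the first two Hom-spaces with $Hom(F(\underline{T_i}), F(\underline X))$ via Yoneda (using Lemma \ref{lift}), the resulting surjectivity translates, by the standard computation of $Ext^1$ from a projective presentation, into $Ext^1(F(\underline X), F(\underline X)) = 0$.

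With $F(\underline X)$ rigid, I would invoke the standard homological fact that a rigid module over a Krull--Schmidt additive category admits no common indecomposable direct summand between the two projective terms of its minimal projective presentation. The proof is short: if $Q$ were such a common summand, minimality forces the induced component $Q \to Q$ of $F(f')$ to lie in the radical, while rigidity (via surjectivity of $Hom(P_0, M) \to Hom(P_1, M)$) forces a nontrivial map $Q \to M$ to factor through this radical component, contradicting the projective-cover property. Combined with the identification $F(\underline T) \cong F(h \cdot \underline T)$ of the opening paragraph, this rules out the simultaneous appearance of $\underline T$ in $\underline{T_0}$ and $h \cdot \underline T$ in $\underline{T_1}$.

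The principal obstacle lies in the $\Gamma$-equivariant bookkeeping: verifying that $F$ is genuinely compatible with direct summands in the orbit sense (the key identification $F(\underline T) \cong F(h \cdot \underline T)$), and ensuring that the reduction to the case with no $\underline{\mathcal T_0}[1]$-summand does not leave unhandled cases when $h \cdot \underline T$ lies in the $\underline{\mathcal T_0}[1]$-part of $\underline{T_1}$. Lemma \ref{Hom-finite} must be in force throughout so that $mod\,\underline{\mathcal C^{\widetilde Q}}_h(\underline{\mathcal T_0})^{op}$ is Krull--Schmidt and the standard rigidity argument applies verbatim.
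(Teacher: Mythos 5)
Your proof is correct in its essentials and works inside the same framework as the paper --- pass to $mod\,\underline{\mathcal C^{\widetilde Q}}_h(\underline{\mathcal T_0})^{op}$ via $F$, identify $F(\underline T)\cong F(h\cdot\underline T)$, and play minimality of the presentation against rigidity of the $\Gamma$-orbit of $\underline X$ --- but the decisive step is executed along a genuinely different line. The paper fixes the simple top $S_{\underline T}$ of $F(\underline T)$, realizes it as $F(\underline Y[1])$ using an admissible exact sequence and the absence of $\Gamma$-loops (Lemmas 5.3 and 5.11 of \cite{HL}), lifts an arbitrary map $F(\underline T_1)\to S_{\underline T}$ back to $\underline{\mathcal C^{\widetilde Q}}_h$ via Lemma \ref{lift}, and concludes $Ext^1(F(\underline X),S_{\underline T})=0$. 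You instead prove, componentwise over $h'\in\Gamma'$ from the rotated triangles $h'\cdot\underline T_1\to h'\cdot\underline T_0\to h'\cdot\underline X$ and the vanishing of $Hom_{\underline{\mathcal C^{\widetilde Q}}}(h'\cdot\underline X,\underline X[1])$, that $Hom(F(\underline T_0),F(\underline X))\to Hom(F(\underline T_1),F(\underline X))$ is surjective --- for which plain Yoneda on the projectives $F(\underline T_i)$ suffices, so Lemma \ref{lift} and the identification $S_{\underline T}\cong F(\underline Y[1])$ are not needed --- and then finish with the radical/projective-cover argument. Your route is leaner in its dependencies; its cost is that the functor category must support the radical bookkeeping ($v(rad\, P_0)\subseteq rad\, M$, projective covers), which Lemma \ref{minimalpro} and the Hom-finiteness of $\underline{\mathcal C^{\widetilde Q}}_h(\underline{\mathcal T_0})$ already provide.

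Two caveats. First, the ``standard homological fact'' as you phrase it --- a rigid module admits no common indecomposable summand between the two terms of its minimal projective presentation --- is false in general: $Ext^1(M,M)=0$ alone does not suffice, because the surjection $M\to S$ need not induce a surjection $Ext^1(M,M)\to Ext^1(M,S)$. The correct hypothesis, and the one your argument actually uses, is surjectivity of $Hom(P_0,M)\to Hom(P_1,M)$ (a $\tau$-rigidity-type condition); since that is precisely what you derive from the triangle and the rigidity of $add(\{h\cdot\underline X\;|\;h\in\Gamma\})$, the proof stands, but you should state the quoted fact with this hypothesis rather than with $Ext^1$-vanishing. Second, you flag but do not close the case where some $h\cdot\underline T[1]$ is a direct summand of $\underline X$, so that $h\cdot\underline T$ enters $\underline T_1$ outside the scope of Lemma \ref{minimalpro}; this is settled by rigidity once more, since $Hom_{\underline{\mathcal C^{\widetilde Q}}}(h^{-1}\cdot\underline X,\underline X[1])=0$ forces $Hom_{\underline{\mathcal C^{\widetilde Q}}}(\underline T,\underline X')=0$ for the complementary summand $\underline X'$, contradicting the occurrence of $\underline T$ in the minimal approximation $\underline T_0$. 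You should also record, as the paper does at the outset, the reduction via Lemmas 2.3 and 2.4 of \cite{HL} to an $h$ of finite order or without fixed points, so that Lemma \ref{Hom-finite} is applicable throughout.
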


\begin{proof}
By Lemma 2.3 and Lemma 2.4 of \cite{HL}, we may assume that $h$ has no fixed points or finite order. By Lemma \ref{Hom-finite}, $\underline{\mathcal C^{\widetilde Q}}_h$ is Hom-finite. For any $h'\in \Gamma'$ and $\underline T'\in \underline{\mathcal T}$, applying $Hom_{\underline{\mathcal C^{\widetilde Q}}}(h'\cdot \underline T',-)$ to the triangle, we get
$$Hom_{\underline{\mathcal C^{\widetilde Q}}}(h'\cdot \underline T',\underline{T}_1)\rightarrow Hom_{\underline{\mathcal C^{\widetilde Q}}}(h'\cdot \underline T',\underline{T}_0)\overset{Hom_{\underline{\mathcal C^{\widetilde Q}}}(h'\cdot \underline T',f)}{\longrightarrow}Hom_{\underline{\mathcal C^{\widetilde Q}}}(h'\cdot \underline T',\underline{X})\rightarrow 0.$$
Since $f$ is minimal, we get a minimal projective resolution of $F(\underline X)$,
$$F(\underline{T}_1)\rightarrow F(\underline{T}_0)\rightarrow F(\underline X)\rightarrow 0.$$ To prove that $h\cdot \underline T$ is not a direct summand of $\underline{T}_1$, it suffices to prove that $F(\underline T)$ is not a direct summand of $F(\underline{T}_1)$, or equivalently $Ext^1(F(\underline{X}),S_{\underline T})=0$, where $S_{\underline T}$ is the simple quotient of $F(\underline T)$.

As $F(\underline{T}_0)\rightarrow F(\underline X)$ is the projective cover of $F(\underline X)$ and $F(\underline{T})$ is a direct summand of $F(\underline{T}_0)$, then there is a non-zero morphism $\widetilde p:F(\underline X)\rightarrow S_{\underline T}$. For any $\widetilde g:F(\underline T_1)\rightarrow S_{\underline T}$, since $F(\underline T_1)$ is projective, there exists $\widetilde q: F(\underline T_1)\rightarrow F(\underline X)$ such that $\widetilde g=\widetilde p \widetilde q$.

Let $T$ be a lifting of $\underline T$, by Lemma \ref{extpre}, we have $T\in \mathcal T$ and $T$ is non-projective. Moreover, since $\underline T$ is indecomposable, we can choose $T$ is indecomposable. By Lemma 5.3 of \cite{HL}, there is an admissible short exact sequence $0\rightarrow Y\rightarrow T'\rightarrow T\rightarrow 0$. Since $\mathcal T$ has no $\Gamma$-loop, by the dual version of Lemma 5.11 (2) of \cite{HL}, and Lemma \ref{extpre}, we have $S_{\underline T}\cong F(\underline Y[1])$.

Thus, according to Lemma \ref{lift}, lifting $\widetilde q$, $\widetilde g$ and $\widetilde p$ as $q,g,p$ in $\underline{\mathcal C^{\widetilde Q}}_h$, where $q\in\bigoplus\limits_{h'\in \Gamma'}Hom_{\underline{\mathcal C^{Q}}}(h'\cdot \underline T_1,\underline X)$, $g\in\bigoplus\limits_{h'\in \Gamma'}Hom_{\underline{\mathcal C^{\widetilde Q}}}(h'\cdot \underline T_1,\underline Y[1])$ and $p\in\bigoplus\limits_{h'\in \Gamma'}Hom_{\underline{\mathcal C^{\widetilde Q}}}(h'\cdot \underline X,\underline Y[1])$. Since $\widetilde g=\widetilde p \widetilde q$ and $Hom(F(\underline T),F(\underline Y[1]))\cong Hom_{\underline{\mathcal C^{\widetilde Q}}_h}(\underline T,\underline Y[1])$, we obtain $g=p\circ q$.

Since $\underline{\mathcal C^{\widetilde Q}}_h$ is $Hom$-finite, we may assume that $g\in\bigoplus\limits_{h'\in I}Hom_{\underline{\mathcal C^{\widetilde Q}}}(h'\cdot \underline T_1,\underline Y[1])$ and $q\in\bigoplus\limits_{h'\in I}Hom_{\underline{\mathcal C^{\widetilde Q}}}(h'\cdot \underline T_1,\underline X)$ for a finite subset $I\subseteq \Gamma'$. According to the composition of morphisms in $\underline{\mathcal C^{\widetilde Q}}_h$, $g=p\circ q$ means $g=p(\sum\limits_{h'\in I}h'\cdot q)$, equivalently, we have the following commutative diagram:
$$\xymatrix{
        \bigoplus\limits_{h'\in I}h'\cdot \underline{X}[1] \ar[r]^{\sum h'\cdot f'} &\bigoplus\limits_{h'\in I}h'\cdot \underline{T}_1\ar[r]^{\sum h'\cdot f}\ar[ld]_{\sum h'\cdot q}\ar[d]^{g}  & \bigoplus\limits_{h'\in I}h'\cdot \underline{T}_0\ar[r] &\bigoplus\limits_{h'\in I}h'\cdot \underline{X} \\
        \bigoplus\limits_{h'\in I}h'\cdot \underline{X}      \ar[r]^{p} &\underline{Y}[1],            &       &       &}$$
 Since $add(\{h\cdot X\;|\;h\in \Gamma\})$ is rigid, we have $g(\sum h'\cdot f')=0$. Therefore, $g$ factor through $\sum h'\cdot f$. Thus, in $\underline{\mathcal C^{\widetilde Q}}_h$, $g$ factors through $f$. By the arbitrary of $g$, we get a surjective map $Hom_{\underline{\mathcal C^{\widetilde Q}}_h}(\underline T_0,\underline Y[1])\twoheadrightarrow Hom_{\underline{\mathcal C^{\widetilde Q}}_h}(\underline T_1,\underline Y[1])$. Since $Hom_{\underline{\mathcal C^{\widetilde Q}}_h}(\underline T_i,\underline Y[1])\cong Hom(F(\underline T_i), F(\underline Y[1]))=Hom(F(\underline T_i), S_{\underline T})$ for $i=1,2$, we get $Hom(F(\underline T_0), S_{\underline T})\rightarrow Hom(F(\underline T_0), S_{\underline T})$ is surjective. Therefore, we obtain $Ext^1(F(\underline X), S_{\underline T})=0$. Our result follows.
\end{proof}

\begin{corollary}
Let $\underline{X}$ be an object of $\underline{\mathcal C}$ such that $add(\{h\cdot \underline{X}\;|\;h\in \Gamma\})$ is rigid. If $ind'_{\underline{\mathcal T_0}}(\underline{X})$ has no negative coordinates, then $\underline{X}\in \underline{\mathcal T}$.
\end{corollary}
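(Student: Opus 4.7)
My plan is to deduce the statement directly from Lemma \ref{distinct} by a sign-coherence analysis on the orbit-summed coordinates of the index. First I would choose a triangle
\[\underline{T}_1\xrightarrow{f'} \underline{T}_0\xrightarrow{f}\underline{X}\rightarrow \underline{T}_1[1]\]
in $\underline{\mathcal C^{\widetilde Q}}$ with $f$ a minimal right $\underline{\mathcal T_0}$-approximation, so that by definition $ind_{\underline{\mathcal T_0}}(\underline X)=[\underline T_0]-[\underline T_1]$ in $K_0(\underline{\mathcal T_0})\cong \mathbb Z^{|\widetilde Q_0|}$. Decomposing $\underline T_0$ and $\underline T_1$ into indecomposables $\underline T_{i'}$ of $\underline{\mathcal T_0}$, the $[i]$-th coordinate of $ind'_{\underline{\mathcal T_0}}(\underline X)$ is
\[\sum_{i'\in[i]}\bigl([\underline T_0:\underline T_{i'}]-[\underline T_1:\underline T_{i'}]\bigr).\]

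The key step is to invoke Lemma \ref{distinct}, which forces the two multiplicity sums above to have disjoint support inside $[i]$: if some $\underline T_{i'}$ with $i'\in[i]$ is a summand of $\underline T_0$, then no $h\cdot \underline T_{i'}$---equivalently, no $\underline T_{i''}$ with $i''\in[i]$, since $[i]$ is a single $\Gamma$-orbit---is a summand of $\underline T_1$. Consequently, for each orbit $[i]$ exactly one of three cases occurs: only $\underline T_0$ contributes (coordinate $\geq 0$), only $\underline T_1$ contributes (coordinate strictly $<0$), or neither contributes (coordinate $0$). The hypothesis that $ind'_{\underline{\mathcal T_0}}(\underline X)$ has no negative coordinates eliminates the middle case, so no orbit contributes to $\underline T_1$; hence $\underline T_1=0$. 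The triangle then collapses to an isomorphism $\underline T_0\xrightarrow{\sim}\underline X$, placing $\underline X$ in $\underline{\mathcal T_0}$.

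The one thing I would pause on is verifying the hypotheses of Lemma \ref{distinct}, namely that $\underline X\in\mathcal U$ so that the index is defined and the approximation triangle behaves as expected. The rigidity of $add\{h\cdot\underline X\mid h\in\Gamma\}$ is given, and the membership in $\mathcal U$ should already be implicit in the setup of the corollary (otherwise $ind'_{\underline{\mathcal T_0}}(\underline X)$ is not even well defined). So the substantial work has been done in Lemma \ref{distinct}, and this corollary reduces to a short sign-coherence bookkeeping argument with no further technical obstacle.
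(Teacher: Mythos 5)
Your proposal is correct and follows essentially the same route as the paper: take the minimal right $\underline{\mathcal T_0}$-approximation triangle, apply Lemma \ref{distinct} to conclude that $\underline{T}_0$ and $\underline{T}_1$ share no direct summands lying in the same $\Gamma$-orbit, and then read off $\underline{T}_1=0$ from the non-negativity of $ind'_{\underline{\mathcal T_0}}(\underline{X})$. Your explicit case analysis of the orbit-summed coordinates, and your remark that $\underline{X}\in\mathcal U$ must be implicit for the index to be defined, only make the paper's terser argument more precise.
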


\begin{proof}
Assume $\underline{T}_1\overset{f'}{\rightarrow} \underline{T}_0\overset{f}{\rightarrow}\underline{X}\rightarrow \underline{T}_1[1]$ is a triangle and $f$ is a minimal right $\mathcal T$-approximation. According to Lemma \ref{distinct}, $\underline{T}_0$ and $\underline{T}_1$ have no direct summands which have the same $\Gamma$-orbits. Further, since $ind'_{\underline{\mathcal T}}(\underline{X})$ has no negative components. Therefore, by the definition of $ind'_{\underline{\mathcal T_0}}$, we obtain $\underline{T}_1=0$. Thus, $\underline{X}\cong\underline{T}_0\in \underline{\mathcal T}$.
\end{proof}

Using the above preparation,  we now can prove Conjecture \ref{$g$-vec} for all acyclic sign-skew-symmetric cluster algebras. The method of the proof follows from that of  Theorem 3.7 (i) in \cite{PP1}.

\begin{theorem}\label{sign-c}
The conjecture \ref{$g$-vec} on sign-coherence holds for all acyclic sign-skew-symmetric cluster algebras.
\end{theorem}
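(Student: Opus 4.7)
The plan is to lift everything to the categorical side via the $\Gamma$-equivariant cluster character and then reduce sign-coherence to a single application of Lemma \ref{distinct} to the direct sum of the reachable objects corresponding to a cluster. Since $\mathcal A(\Sigma)$ is assumed to have principal coefficients at $\Sigma$, we may use the unfolding $(\widetilde Q,F,\Gamma)$ and the cluster character $CC'$ on $\underline{\mathcal C^{\widetilde Q}}$ introduced in Section \ref{3}. Fix an arbitrary cluster $X'=\{x_1',\ldots,x_n'\}$ of $\mathcal A(\Sigma)$. By Theorem \ref{reach}, each $x_j'=CC'(\underline X_j)$ for some indecomposable reachable object $\underline X_j\in\underline{\mathcal C^{\widetilde Q}}$, and the cluster-tilting subcategory $\underline{\mathcal T'}:=add\bigl(\{h\cdot \underline X_j\mid h\in\Gamma,\; 1\le j\le n\}\bigr)\cup\underline{\mathcal T''}$ of $\underline{\mathcal C^{\widetilde Q}}$ is obtained from $\underline{\mathcal T_0}$ by a sequence of mutations not touching $\underline{\mathcal T''}$; in particular it is rigid.

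Set $\underline X:=\bigoplus_{j=1}^n\underline X_j$. Then $add(\{h\cdot\underline X\mid h\in\Gamma\})$ is rigid, and, using the $2$-Calabi-Yau property and Lemma \ref{extpre}, $Ext^1(\underline T,\underline X)=0$ for every $\underline T\in\underline{\mathcal T''}$, so $\underline X\in\mathcal U$. Take the triangle
$$\underline T_1\overset{f'}{\longrightarrow}\underline T_0\overset{f}{\longrightarrow}\underline X\longrightarrow \underline T_1[1]$$
with $f$ a minimal right $\underline{\mathcal T_0}$-approximation. Since minimal right approximations split as direct sums, $\underline T_i=\bigoplus_j\underline T_i^j$ where $\underline T_1^j\to\underline T_0^j\to\underline X_j\to\underline T_1^j[1]$ is the corresponding minimal triangle for $\underline X_j$. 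Proposition \ref{g-vec}(ii) then gives the $[i]$-th component of the $\mathbf g$-vector of $x_j'$ as
$$g_{[i]}^{\,j}=\sum_{i'\in[i]}\Bigl([\underline T_0^j:\underline T_{i'}]-[\underline T_1^j:\underline T_{i'}]\Bigr).$$

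Now apply Lemma \ref{distinct} to the rigid orbit of $\underline X$: if $\underline T_{i'}$ (for some $i'\in[i]$) is a direct summand of $\underline T_0$, then no $h\cdot\underline T_{i'}=\underline T_{i''}$ with $i''\in[i]$ is a direct summand of $\underline T_1$, and vice versa. Consequently, for each orbit $[i]$ exactly one of the following holds: either $[\underline T_0^j:\underline T_{i'}]=0$ for all $j$ and all $i'\in[i]$, giving $g_{[i]}^{\,j}\le 0$ for every $j$; or $[\underline T_1^j:\underline T_{i'}]=0$ for all $j$ and all $i'\in[i]$, giving $g_{[i]}^{\,j}\ge 0$ for every $j$. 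In either case the $[i]$-th coordinates of $\mathbf g(x_1'),\ldots,\mathbf g(x_n')$ have a common sign, which is exactly the sign-coherence asserted in Conjecture \ref{$g$-vec}.

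The main step that requires care is verifying that the minimal $\underline{\mathcal T_0}$-approximation of $\underline X=\bigoplus_j\underline X_j$ really decomposes as the direct sum of the individual minimal approximations, and that Lemma \ref{distinct} may be applied to the full sum rather than to each $\underline X_j$ separately; this is precisely what converts an individual rigidity statement into the uniform sign-coherence across the whole cluster. Once this is in place, the remaining verifications (rigidity of $add(\{h\cdot\underline X\mid h\in\Gamma\})$, membership of $\underline X$ in $\mathcal U$, and the dictionary between clusters and cluster-tilting subcategories given by $CC'$) are direct consequences of the material already established in Sections \ref{2} and \ref{3}.
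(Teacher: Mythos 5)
Your proposal is correct and follows essentially the same route as the paper: both lift the cluster to reachable objects via $CC'$, read off the $\mathbf g$-vectors from indices via Proposition \ref{g-vec}, form the direct sum so that its minimal $\underline{\mathcal T_0}$-approximation is the direct sum of the individual ones, and invoke Lemma \ref{distinct} to forbid a $\Gamma$-orbit of $\underline{\mathcal T_0}$ from contributing to both $\underline T_0$ and $\underline T_1$. The only cosmetic difference is that the paper sums just the two offending objects $\underline X_s,\underline X_{s'}$ and argues by contradiction, whereas you sum all $n$ and conclude directly.
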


\begin{proof}
For any cluster $Z=\{z_1,\cdots,z_n\}$ of $\mathcal A(\Sigma)$, by Theorem \ref{reach}, we associate a cluster tilting subcategory $\underline{\mathcal T}$ of $\underline{\mathcal C^{\widetilde Q}}$ which obtained by a series of mutations of $\underline{\mathcal T_0}$ and the mutations do no take at $\underline{\mathcal T''}$. Precisely, there are $n$ indecomposable objects $\{\underline{X_j}\;|\;j=1,\cdots,n\}$ such that $\underline{\mathcal T}=add(\{h\cdot \underline{X_j}\;|\;j=1,\cdots,n\}\cup \underline{\mathcal T''})$ and $CC'(\underline{X_i})=z_i$ for $i=1,\cdots,n$. By Proposition \ref{g-vec}, the $g$-vector $g^j_{[1]},\cdots,g^j_{[n]}$ of $z_j$ is given by $g^j_{[i]}=\sum
\limits_{i'\in[i]}[ind_{\underline{\mathcal T_0}}(\underline{X_j}):\underline{T_{i'}}]$.

Suppose that there exist $s$ and $s'$ such that $g^s_{[i]}>0$ and $g^{s'}_{[i]}<0$. Assume that $\underline{T^j}_1\rightarrow \underline{T^j}_0\overset{f^j}{\rightarrow}\underline{X_j}\rightarrow \underline{T^j}_1[1]$ be the triangle with $f^j$ is a minimal right $\underline{\mathcal T_0}$-approximation for $j=s, s'$. Thus, there exist $h,h'\in \Gamma$ such that $h\cdot\underline{T_i}$ (respectively, $h'\cdot\underline{T_i}$) is a direct summand of $\underline{T^s}_0$ (respectively, $\underline{T^{s'}}_1$). Furthermore, in the triangle $$\bigoplus\limits_{j=s,s'}\underline{T^j}_1\rightarrow \bigoplus\limits_{j=s,s'}\underline{T^j}_0\overset{\bigoplus\limits_{j=s,s'}f^j}{\rightarrow}\bigoplus\limits_{j=s,s'}\underline{X_j}\rightarrow \bigoplus\limits_{j=s,s'}\underline{T^j}_1[1],$$ the morphism $\bigoplus\limits_{j=s,s'}f^j$ is a minimal right $\underline{\mathcal T_0}$-approximation. According to Lemma \ref{distinct}, $h'\cdot\underline{T_i}$ is not a direct summand of $\underline{T^{s'}}_1$ since $h\cdot\underline{T_i}$ is a direct summand of $\underline{T^{s}}_0$. This is a contradiction. Our result follows.
\end{proof}

\section{The recurrence of ${\bf g}$-vectors}\label{basechange1}

\begin{theorem}(\cite{DWZ})
Conjecture \ref{basechange} holds true for all finite rank skew-symmetric cluster algebras.
\end{theorem}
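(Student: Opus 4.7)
The plan is to reduce Conjecture \ref{basechange} for finite rank skew-symmetric cluster algebras to the sign-coherence of $\mathbf{c}$-vectors established in \cite{DWZ} via the tropical duality of Nakanishi-Zelevinsky \cite{NZ}, exactly as foreshadowed in Remark \ref{remark}(2).

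First I would invoke the main theorem of \cite{DWZ}, which asserts that in any seed of a skew-symmetric cluster algebra with principal coefficients, every $\mathbf{c}$-vector is either non-negative or non-positive. This is established representation-theoretically: the columns of the $C$-matrix arise, up to sign, as dimension vectors of positive or negative decorated representations of the Jacobian algebra of the underlying quiver with potential, and the mutation of decorated representations preserves this dichotomy. Consequently, at every seed and each direction $k$ there is a well-defined sign $\varepsilon_k \in \{+1,-1\}$ recording which alternative occurs for the $k$-th $\mathbf{c}$-vector.

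Next I would apply the tropical duality reduction in \cite{NZ}: for any skew-symmetrizable seed pattern one has the identity $G_t^{T} D^{-1} C_t = D^{-1}$, which in the skew-symmetric case trivializes the role of the diagonal $D$, so the $G$-matrix and the $C$-matrix at any seed determine each other. Combining this duality with the known mutation rule for $C$-matrices and using the sign $\varepsilon_k$ provided by the previous step, a direct tropical computation yields that the $G$-matrix transforms under mutation at $k$ exactly by (\ref{eqn3.1}); the two branches of the formula correspond to $\varepsilon_k = +1$ and $\varepsilon_k = -1$, with the term $-b_{ik}^{t_1}\min(g_k^{t_1},0)$ being the common expression that interpolates between the two alternatives.

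The main obstacle is not conceptual, since both ingredients are deep but established in the literature; the actual content lies in verifying that the sign conventions used in \cite{DWZ}, \cite{NZ} and the present paper match up so that the two pieces compose cleanly. Once this bookkeeping is completed, the theorem is immediate, and it is precisely this finite rank skew-symmetric case that will serve as the base step for the unfolding-based extension to the acyclic sign-skew-symmetric setting carried out in the remainder of Section \ref{basechange1}.
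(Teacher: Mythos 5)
Your proposal is correct, but it does not follow the route of the cited source: the paper offers no argument of its own here, attributing the statement entirely to \cite{DWZ}, and in \cite{DWZ} the recurrence of Conjecture \ref{basechange} is obtained \emph{directly} from the representation theory of quivers with potentials --- the ${\bf g}$-vector of a cluster variable is read off from homological invariants of the associated decorated representation, and the recurrence is verified by tracking how these invariants transform under the Derksen--Weyman--Zelevinsky mutation of decorated representations. You instead extract from \cite{DWZ} only the sign-coherence of ${\bf c}$-vectors (equivalently, that $F$-polynomials have constant term $1$) and then feed it into the purely tropical duality of \cite{NZ}, whose Proposition 4.2 turns ${\bf c}$-vector sign-coherence into exactly the ${\bf g}$-vector base-change formula. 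This composite argument is sound and is in fact anticipated by Remark \ref{remark}(2) of the paper; note, however, that \cite{NZ} postdates \cite{DWZ}, so it is a reorganization of the proof rather than the original one. What your route buys is generality and economy on the categorical side: it works verbatim for skew-symmetrizable exchange matrices once ${\bf c}$-vector sign-coherence is available there, and it isolates the single representation-theoretic input needed. What the direct \cite{DWZ} route buys is self-containedness within the quiver-with-potential framework and, relevantly for this paper, a formulation that visibly extends to the infinite rank (strongly almost finite) setting needed for Theorem \ref{skew}, since both the decorated-representation argument and the tropical identities are local in the quiver; either way that extension still needs to be remarked upon, as the paper does.
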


It is easy to see that the above theorem can be extended to the situation of infinite rank skew-symmetric cluster algebras, that is, we have:

\begin{theorem}\label{skew}
Conjecture \ref{basechange} holds true for all infinite rank skew-symmetric cluster algebras.
\end{theorem}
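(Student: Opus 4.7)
The plan is to reduce Theorem \ref{skew} to the finite rank case already handled in the previous theorem, exploiting the fact that any finite sequence of mutations in an infinite rank cluster algebra touches only finitely many vertices of the underlying quiver, together with the purely local nature of quiver mutation.

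Concretely, fix an infinite rank skew-symmetric cluster algebra with vertex set $I$ indexing the rows and columns of the exchange matrix, fix an edge $t_1 \overset{k}{--} t_2$ in the tree $\mathbb{T}_I$, fix $t \in \mathbb{T}_I$ together with an index $a \in I$, and let ${\bf g}^{B^1;t_1}_{a;t} = (g_i^{t_1})_{i \in I}$ and ${\bf g}^{B^2;t_2}_{a;t} = (g_i^{t_2})_{i \in I}$ be the two $g$-vectors whose relation we wish to establish. Both vectors lie in $\bigoplus_{i\in I}\mathbb{Z}e_i$ and hence are supported on a finite subset. I would then choose a finite subset $S \subseteq I$ containing: the index $k$; every label appearing on the finite paths from $t_1$ to $t$ and from $t_2$ to $t$ in $\mathbb{T}_I$; the supports of both $g$-vectors; and all neighbours in the exchange quiver of the vertices already chosen (iterating this enlargement along the finite mutation sequence, which terminates thanks to local finiteness of the quiver and finiteness of the path in $\mathbb{T}_I$).

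Next I would form the truncated seeds: take the full $S\times S$ principal submatrix of $B^1$, together with principal coefficients indexed by $S$, and do likewise for $B^2$. This yields finite rank skew-symmetric cluster algebras in which I perform the same mutation sequences as in the original, all of which lie inside $S$ by construction of $S$. Because mutation of the exchange matrix and of a cluster variable at some vertex $j$ uses only the rows and columns indexed by neighbours of $j$, the entries of the mutated matrices on $S \times S$ and the mutated cluster variables involving only variables indexed by $S$ coincide with those computed in the infinite rank algebra. In particular the truncated $g$-vectors of $x_a$ at $t$ agree with $(g_i^{t_1})_{i\in S}$ and $(g_i^{t_2})_{i\in S}$, which are the whole $g$-vectors by the choice of $S$. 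Applying the previous theorem to the truncated seeds immediately yields the recurrence (\ref{eqn3.1}), and transporting it back gives the desired identity in the infinite rank algebra.

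The main obstacle is the bookkeeping of the second step: one must verify that the exchange relations and the mutation of $B$ used to go from $t_1$ (or $t_2$) to $t$, when read entry-by-entry on $S$, genuinely produce the same output as in the full algebra; this amounts to checking that no index outside $S$ ever contributes to a relevant entry, and that enlarging $S$ by one ``mutation neighbourhood'' per step along the finite path suffices. Once this local-to-global compatibility is established, the theorem follows as a direct corollary of the finite rank statement with no further computation required.
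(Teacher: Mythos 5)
Your proposal is correct and is essentially the argument the paper intends: the paper gives no actual proof of Theorem \ref{skew}, merely asserting that the finite rank result of \cite{DWZ} "can be extended" to infinite rank, and your reduction by truncating to a finite, mutation-closed subset of vertices (using locality of matrix and exchange-relation mutation plus finite support of ${\bf g}$-vectors) is precisely the standard justification. This is the same locality principle the paper itself invokes, without detail, in Lemma \ref{finite} and Proposition \ref{rec}, so your write-up supplies the missing details rather than a genuinely different route.
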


We first give the following easy lemma.

\begin{lemma}\label{finite}
Let $(Q,F,\Gamma)$ be the unfolding of a matrix $B$ and $\mathcal A=\mathcal A(\Sigma(Q,F))$. For any sequence of orbits $([i_1],\cdots,[i_s])$ and $a\in Q_0$, there exist finite subsets $S_j\subseteq [i_j], j=1,\cdots,s$ such that $\prod\limits_{k\in V_s}\mu_{k}\cdots\prod\limits_{k\in V_1}\mu_k(x_a)=\widetilde\mu_{[i_s]}\cdots\widetilde\mu_{[i_1]}(x_a)$ for all finite subsets $V_j, j=1,\cdots,s$, satisfying $S_j\subseteq V_j\subseteq [i_j], j=1,\cdots,s$.
\end{lemma}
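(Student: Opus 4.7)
The plan is to prove Lemma \ref{finite} by induction on $s$, strengthening the statement to an inductively more robust form. Writing $\Sigma$ for the seed $\Sigma(Q,F)$, the strengthened claim is that for any finite subset $A\subseteq Q_0$ and any sequence of orbits $([i_1],\ldots,[i_s])$, there exist finite subsets $S_j\subseteq [i_j]$ such that for every choice of finite $V_j$ with $S_j\subseteq V_j\subseteq [i_j]$, the seeds $\prod_{k\in V_s}\mu_k\cdots\prod_{k\in V_1}\mu_k\Sigma$ and $\widetilde\mu_{[i_s]}\cdots\widetilde\mu_{[i_1]}\Sigma$ agree on $A$, in the sense that their cluster variables at every vertex of $A$ coincide and their exchange matrix entries $b_{jk}$ coincide whenever $j\in A$ or $k\in A$. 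The original lemma is then the case $A=\{a\}$ together with the cluster variable part of this conclusion.

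The base case $s=0$ is immediate. Throughout the argument the essential structural inputs are: (a) every intermediate seed $\Sigma^{(r)}:=\widetilde\mu_{[i_r]}\cdots\widetilde\mu_{[i_1]}\Sigma$ is locally finite, inherited from the strongly almost finite nature of $Q$ recalled after Theorem \ref{mainlemma}, via the fact that the orbit mutation formula reduces to a finite sum at each entry; and (b) vertices of a single orbit $[i_j]$ are pairwise non-adjacent, since $(Q,F)$ has no $\Gamma$-loops and no $\Gamma$-$2$-cycles, so that the ordinary mutations $\mu_k$ for $k\in V_j$ pairwise commute and $\prod_{k\in V_j}\mu_k$ is unambiguously defined.

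For the inductive step from $s$ to $s+1$ with data $A$, I first enlarge $A$ to a finite set $A'\supseteq A$ carrying all the local information needed to determine $\widetilde\mu_{[i_{s+1}]}$ on $A$ inside $\Sigma^{(s)}$: namely $A$ itself, every vertex adjacent in $\Sigma^{(s)}$ to some vertex of $A$, and every $i'\in[i_{s+1}]$ for which $b_{ji'}^{(s)}b_{i'k}^{(s)}\neq 0$ for some pair $(j,k)$ meeting $A$. Local finiteness of $\Sigma^{(s)}$ forces $A'$ to be finite. Applying the induction hypothesis to $A'$ and the truncated sequence $([i_1],\ldots,[i_s])$ yields finite $S_1,\ldots,S_s$ producing agreement with $\Sigma^{(s)}$ on $A'$ for any admissible $V_1,\ldots,V_s$. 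I then set $S_{s+1}:=(A\cap[i_{s+1}])\cup\{i'\in[i_{s+1}]\mid i'\text{ is adjacent to }A\text{ in }\Sigma^{(s)}\}$, which is again finite. Decomposing $V_{s+1}=S_{s+1}\sqcup(V_{s+1}\setminus S_{s+1})$ and using commutativity, the mutations at indices in $S_{s+1}$ reproduce the action of $\widetilde\mu_{[i_{s+1}]}$ on $A$ (since agreement on $A'$ supplies the correct matrix and cluster data, and the orbit mutation formula coincides, entry by entry, with the corresponding finite composition of ordinary mutations on non-adjacent vertices), while mutations at $k\in V_{s+1}\setminus S_{s+1}$ leave the data attached to $A$ untouched: such $k\in[i_{s+1}]$ is not adjacent to $A$, so the mutation correction $[b_{jk}]_+[b_{kl}]_+-[-b_{jk}]_+[-b_{kl}]_+$ vanishes for every $(j,l)$ meeting $A$, and $\mu_k$ only alters the cluster variable at the vertex $k\notin A$.

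The principal obstacle is this last verification, namely propagating through the induction enough matrix-entry agreement to justify reading off each successive orbit mutation locally; this is precisely why the statement must be strengthened beyond what the lemma literally asks. Once the strengthened invariant is in place, the lemma follows by specializing $A=\{a\}$.
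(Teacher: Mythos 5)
Your proposal is correct. The paper's own proof is essentially a two-sentence assertion: it declares that $\widetilde\mu_{[i_s]}\cdots\widetilde\mu_{[i_1]}(x_a)$ ``is determined by finite vertices of $Q_0$,'' extracts the sets $S_j$ from that, and then notes stability under enlarging $S_j$ to $V_j$; no induction, no strengthened statement, and no explicit verification is given. What you have written is a genuine proof of exactly the claim the paper takes for granted: the induction on $s$ with the strengthened invariant (agreement of cluster variables on a finite set $A$ together with all exchange-matrix entries meeting $A$) is precisely the bookkeeping needed to make ``determined by finitely many vertices'' rigorous, and your two structural inputs --- local finiteness of every intermediate seed, and pairwise non-adjacency of vertices within one orbit (which needs only the no-$\Gamma$-loop condition, and gives commutativity of the $\mu_k$, $k\in V_j$) --- are exactly the right ones. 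Your verification that mutations at $k\in V_{s+1}\setminus S_{s+1}$ act trivially on the data attached to $A$ (both the vanishing of the correction term and the sign-flip acting on zero entries) closes the one point where the paper's argument could conceivably fail. So the underlying idea is the same as the paper's, but your write-up supplies the actual argument; if anything, the strengthened invariant tracking matrix entries is the content the paper omits and the content a careful reader would want.
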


\begin{proof}
  Since $\widetilde\mu_{[i_s]}\cdots\widetilde\mu_{[i_1]}(x_a)$ is determined by finite vertices of $Q_0$, there exist finite subsets $S_j\subseteq [i_j], j=1,\cdots,s$ such that $\prod\limits_{k\in S_s}\mu_{k}\cdots\prod\limits_{k\in S_1}\mu_k(x_a)=\widetilde\mu_{[i_s]}\cdots\widetilde\mu_{[i_1]}(x_a)$. Then for all finite subsets $V_j, j=1,\cdots,s$ satisfying $S_j\subseteq V_j\subseteq [i_j], j=1,\cdots,s$, we have $\prod\limits_{k\in V_s}\mu_{k}\cdots\prod\limits_{k\in V_1}\mu_k(x_a)=\widetilde\mu_{[i_s]}\cdots\widetilde\mu_{[i_1]}(x_a)$.
\end{proof}

Let $\mathcal A_1$ (respectively, $\mathcal A_2$) be the cluster algebra with principal coefficients at $\Sigma_1=(\widetilde X,\widetilde Y,Q)$ (respectively, $\Sigma_2=(\widetilde X', \widetilde Y', \widetilde \mu_{[k]}(Q))$). For any sequence $([i_1],\cdots,[i_s])$ of orbits of $Q_0$ and $a\in Q_0=\widetilde \mu_{[k]}(Q_0)$, denote $g^{Q,a}=(g^Q_i)_{i\in Q_0}$ (respectively, $g^{\widetilde \mu_{[k]}(Q),a}=(g^{\widetilde \mu_{[k]}(Q)}_i)_{i\in Q_0}$) be the ${\bf g}$-vector of the cluster variable $\widetilde\mu_{[i_s]}\cdots\widetilde\mu_{[i_1]}(x_a)$ (respectively, $\widetilde\mu_{[i_s]}\cdots\widetilde\mu_{[i_1]}\widetilde \mu_{[k]}(x'_a)$.

As a consequence of Theorem \ref{skew}, we have the following property.

\begin{proposition}\label{rec}
Keep the notations as above. The following recurrence holds:
\begin{equation}\label{eqn3.1}
g_i^{\widetilde \mu_{[k]}(Q)}=\begin{cases}-g_{i}^{Q}& \text{if }i\in [k];\\g_i^{Q}+\sum\limits_{k'\in [k]}[b_{ik'}]_+g_{k'}^{Q}-\sum\limits_{k'\in [k]}b_{ik'}min(g_{k'}^{Q},0)&\text{if }i\not\in [k].\end{cases}
\end{equation}
\end{proposition}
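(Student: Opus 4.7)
My plan is to deduce Proposition \ref{rec} from Theorem \ref{skew} by realizing the orbit mutation $\widetilde{\mu}_{[k]}$, insofar as it affects the ${\bf g}$-vector of a fixed cluster variable, as a finite composition of pairwise-commuting single-vertex mutations $\mu_{k'}$ in the infinite rank skew-symmetric cluster algebra $\mathcal{A}_1 = \mathcal{A}(\Sigma(Q,F))$. First I would observe that the ${\bf g}$-vector $g^{Q,a}$ of $x := \widetilde{\mu}_{[i_s]} \cdots \widetilde{\mu}_{[i_1]}(x_a)$ has finite support, since by Proposition \ref{gindex} it is determined by the index in $K_0(\underline{\mathcal T_0})$ of a single object, hence involves only finitely many indecomposables. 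Then I would fix a finite subset $V \subseteq [k]$ containing $\{k' \in [k] : g_{k'}^Q \neq 0\}$ together with $i$ itself when $i \in [k]$; after possibly enlarging $V$ and choosing finite $V_j \subseteq [i_j]$ via Lemma \ref{finite}, the iterated orbit mutations in the definitions of both cluster variables will be replaced by finite products of single-vertex mutations on the two sides.

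Next I would use that $(Q,F)$ has no $\Gamma$-loops at $[k]$, which forces $b_{k'_1 k'_2} = 0$ for every pair of distinct $k'_1, k'_2 \in [k]$: the single-vertex mutations $\mu_{k'}$ for $k' \in V$ therefore commute pairwise, and performing any sub-collection of them leaves $b_{ik'}$ unchanged for every $k' \in V$ and $i \notin V$. Iterating Theorem \ref{skew} once for each $\mu_{k'}$ (in any order) thus yields
\begin{align*}
g_i^{\mu_V(Q), a} &= g_i^{Q,a} + \sum_{k' \in V}[b_{ik'}]_+\, g_{k'}^{Q,a} - \sum_{k' \in V} b_{ik'}\,\min(g_{k'}^{Q,a},0), \quad \text{for } i \notin V,\\
g_{k'}^{\mu_V(Q), a} &= -g_{k'}^{Q,a}, \quad \text{for } k' \in V,
\end{align*}
where $\mu_V := \prod_{k' \in V}\mu_{k'}$ denotes the corresponding change of initial seed in $\mathcal{A}_1$. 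By the choice of $V$, extending the sums to all of $[k]$ adds nothing (the omitted $g_{k'}^{Q,a}$ vanish); and for $i \in [k] \setminus V$ every summand is zero since $b_{ik'} = 0$ for $k' \in V \subseteq [k]$, giving $0 = -g_i^{Q,a}$ on both sides. Hence the displayed formula matches the claimed recurrence.

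The main obstacle, which is the genuinely technical step, is to identify $g_i^{\mu_V(Q),a}$ (computed in $\mathcal{A}_1$ after the change of initial seed by $\mu_V$) with $g_i^{\widetilde{\mu}_{[k]}(Q),a}$ (the ${\bf g}$-vector in the separately defined cluster algebra $\mathcal{A}_2$ with principal coefficients at $\Sigma_2$). The exchange matrices $\mu_V(Q)$ and $\widetilde{\mu}_{[k]}(Q)$ agree on $V$ and its neighbors but can disagree at vertices in $[k] \setminus V$ and their neighbors. I would invoke Lemma \ref{finite} once more, this time in $\mathcal{A}_2$, to show that the cluster variable $\widetilde{\mu}_{[i_s]}\cdots\widetilde{\mu}_{[i_1]}\widetilde{\mu}_{[k]}(x'_a)$ is obtained by a finite sequence of single-vertex mutations in $\mathcal{A}_2$ which only probes the region of the quiver where $\mu_V(Q)$ and $\widetilde{\mu}_{[k]}(Q)$ coincide (enlarging $V$ once more if necessary). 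Once this matching is justified, the formula obtained in the previous paragraph coincides with $g^{\widetilde{\mu}_{[k]}(Q), a}$ and the proposition follows.
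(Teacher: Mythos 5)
Your proposal is correct and follows essentially the same route as the paper's proof: both truncate the orbit mutation $\widetilde\mu_{[k]}$ to a sufficiently large finite subset of $[k]$ via Lemma \ref{finite} and then iterate Theorem \ref{skew}, using the absence of $\Gamma$-loops (so $b_{k'k''}=0$ within the orbit) to make the single-vertex recurrences accumulate. The paper resolves your ``main obstacle'' exactly as you suggest, observing that the relevant cluster variable is determined by finitely many vertices, so $g_i^{\widetilde\mu_{[k]}(Q)}=g_i^{\prod_{k'\in T}\mu_{k'}(Q)}$ for every sufficiently large finite $T\subseteq[k]$.
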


\begin{proof}
By Lemma \ref{finite}, for $j=1,\cdots,s$, there exist finite subsets $S^{1}_j\subseteq [i_j]$ (respectively, $S^{2}_j\subseteq [i_j]$) such that $g^{Q,a}$ (respectively, $g^{\widetilde \mu_{[k]}(Q),a}$) is the ${\bf g}$-vector of the cluster variable $\prod\limits_{t\in V^1_s}\mu_{t}\cdots\prod\limits_{t\in V^1_1}\mu_{t}(x_a)$ (respectively, $\prod\limits_{t\in V^2_s}\mu_{t}\cdots\prod\limits_{t\in V^1_1}\mu_{t}\widetilde\mu_{[k]}(x'_a)$) for all finite subsets $V^1_j$ (respectively, $V^2_j$) satisfying that $S^1_j\subseteq V^1_j\subseteq [i_j]$ (respectively, $S^2_j\subseteq V^2_j\subseteq [i_j]$). Choose $S_j=S^1_j\cup S^2_j$, we have $g^{Q,a}$ and $g^{\widetilde \mu_{[k]}(Q),a}$ as the ${\bf g}$-vectors of the cluster variables $\prod\limits_{t\in S_s}\mu_{t}\cdots\prod\limits_{t\in S_1}\mu_{t}(x_a)$ and $\prod\limits_{t\in S_s}\mu_{t}\cdots\prod\limits_{t\in S_1}\mu_{t}\widetilde\mu_{[k]}(x'_a)$ respectively.

For any finite subset $T\subseteq [k]$, denote by $\mathcal A^T$  the cluster algebra with principal coefficients at $\Sigma^T=(\widetilde X^T, \widetilde Y^T, \prod\limits_{k'\in T}\mu_{k'}(Q))$. Denote by $g^{\prod\limits_{k'\in T}\mu_{k'}(Q),a}=(g_{i}^{\prod\limits_{k'\in T}\mu_{k'}(Q)})_{i\in Q_0}$  the ${\bf g}$-vector of the cluster variable $\prod\limits_{t\in S_s}\mu_{t}\cdots\prod\limits_{t\in S_1}\mu_{t}\prod\limits_{k'\in T}\mu_{k'}(x_a^T)$.

Since the cluster variable $\prod\limits_{t\in S_s}\mu_{t}\cdots\prod\limits_{t\in S_1}\mu_{t}\widetilde\mu_{[k]}(x'_a)$ is only determined by finite vertices of $Q_0$, there exists a finite subset $S\subseteq [k]$ such that $g_i^{\widetilde \mu_{[k]}(Q)}=g_i^{\prod\limits_{k'\in T}\mu_{k'}(Q)}$ for any finite set $T$ satisfying $S\subset T\subseteq [k]$.

Furthermore, by Theorem \ref{skew}, for any subset $T\subseteq Q_0$, we have
\begin{equation}\label{eqn3.1}
g_i^{\prod\limits_{k'\in T}\mu_{k'}(Q)}=\begin{cases}-g_{i}^{Q}& \text{if }i\in T;\\g_i^{Q}+\sum\limits_{k'\in T}[b_{ik'}]_+g_{k'}^{Q}-\sum\limits_{k'\in T}b_{ik'}min(g_{k'}^{Q},0)&\text{if }i\not\in T.\end{cases}
\end{equation}

Therefore, the result holds.
\end{proof}

\begin{theorem}\label{base}
Conjecture \ref{basechange} holds true for all acyclic sign-skew-symmetric cluster algebras. That is,  let $B=(b_{[i][j]})\in Mat_{n\times n}(\mathbb Z)$ be a sign-skew-symmetric matrix which is mutation equivalent to an acyclic matrix and let $t_1\overset{[k]}{--} t_2\in \mathbb T_n$ and $B^2=\mu_{[k]}(B^1)$. For $[a]\in \{[1],\cdots,[n]\}$ and $t\in \mathbb T_n$, assume ${\bf g}^{B^1; t_1}_{[a]; t}=(g_{[1]}^{t_1},\cdots,g_{[n]}^{t_1})$ and ${\bf g}^{B^2; t_2}_{[a]; t}=(g_{[1]}^{t_2},\cdots,g_{[n]}^{t_2})$, then
\begin{equation}\label{eqn3.1}
g_{[i]}^{t_2}=\begin{cases}-g_{[k]}^{t_1}& \text{if }[i]=[k];\\g_{[i]}^{t_1}+[b_{[i][k]}^{t_1}]_+g_{[k]}^{t_1}-b_{[i][k]}^{t_1}min(g_{[k]}^{t_1},0)&\text{if }[i]\neq [k].\end{cases}
\end{equation}
\end{theorem}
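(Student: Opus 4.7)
The plan is to lift both seeds to the skew-symmetric unfolding, apply the recurrence of Proposition \ref{rec} there, and fold the resulting identity using Theorem \ref{ho}, with sign-coherence supplying the arithmetic over the $[k]$-orbit. Since $B^1$ lies in the mutation class of an acyclic matrix, Theorem \ref{mainlemma} combined with Lemma \ref{mut} produces an unfolding $(\widetilde Q, F, \Gamma)$ of $B^1$; let $\widetilde\Sigma^1$ be the principal-coefficients seed attached to $(\widetilde Q, F)$ and $\widetilde\Sigma^2 = \widetilde\mu_{[k]}(\widetilde\Sigma^1)$. By Theorem \ref{sur} and Theorem \ref{ho}, the surjection $\pi$ is $\mathbb Z^n$-graded via $\lambda$, so the folded $g$-vector coordinates satisfy $g^{t_j}_{[i]} = \sum_{i \in [i]} g^{\widetilde t_j}_i$ for $j = 1,2$. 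Applying Proposition \ref{rec} component-wise in the unfolded (skew-symmetric) cluster algebra and then summing over $i \in [i]$, the case $[i] = [k]$ is immediate, and for $[i] \neq [k]$ the claim reduces to showing
\[
S := \sum_{i \in [i],\, k' \in [k]} \left([b^{\widetilde t_1}_{ik'}]_+ g^{\widetilde t_1}_{k'} - b^{\widetilde t_1}_{ik'}\min(g^{\widetilde t_1}_{k'}, 0)\right) = [b^{t_1}_{[i][k]}]_+ g^{t_1}_{[k]} - b^{t_1}_{[i][k]} \min(g^{t_1}_{[k]}, 0).
\]

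The essential input is a pair of sign-coherence facts on the $[i]\times[k]$-block of $\widetilde B^1$ and on the $[k]$-coordinates of the unfolded $g$-vector. First, the entries $\{b^{\widetilde t_1}_{i, k'} : i \in [i],\, k' \in [k]\}$ share a common sign $\epsilon_b$; this follows from the absence of $\Gamma$-$2$-cycles at $[k]$ in $\widetilde Q$ (Definition \ref{orbitmu}) together with $\Gamma$-equivariance of the quiver. Second, the coordinates $\{g^{\widetilde t_1}_{k'} : k' \in [k]\}$ share a common sign $\epsilon_g$: since cluster mutation commutes with the $\Gamma$-action, $g^{\widetilde t_1, a}_{h \cdot k_0} = g^{\widetilde t_1, h^{-1} a}_{k_0}$, exhibiting these values as the $k_0$-th coordinates of the $g$-vectors of the cluster variables $\{\widetilde\mu_{[i_s]}\cdots\widetilde\mu_{[i_1]}(x_{a'}) : a' \in [a]\}$, which all lie in a single cluster; the known skew-symmetric case of Conjecture \ref{$g$-vec} then supplies the common sign.

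With both signs in hand, $S$ reduces to a four-case analysis on $(\epsilon_b, \epsilon_g) \in \{+,-\}^2$. Using the identity $[b]_+ x - b\min(x, 0) = [b]_+[x]_+ - [-b]_+[-x]_+$ to rewrite each summand, together with the orbit-averaging relation $\sum_{i \in [i]} b^{\widetilde t_1}_{i, k'} = b^{t_1}_{[i][k]}$ (independent of the representative $k' \in [k]$ by $\Gamma$-equivariance) to factor the double sum, one obtains $S = b^{t_1}_{[i][k]} g^{t_1}_{[k]}$ when $\epsilon_b = \epsilon_g = +$, $S = 0$ when $\epsilon_b \neq \epsilon_g$, and $S = -b^{t_1}_{[i][k]} g^{t_1}_{[k]}$ when $\epsilon_b = \epsilon_g = -$; in each case this matches the right-hand side, yielding the theorem. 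The main obstacle is the second sign-coherence fact, which is not a direct instance of Conjecture \ref{$g$-vec} but rather a recasting via $\Gamma$-equivariance that converts \emph{coordinates along a $\Gamma$-orbit of one $g$-vector} into \emph{one coordinate across the $g$-vectors of a single cluster}, which is the form handled by the known skew-symmetric sign-coherence result.
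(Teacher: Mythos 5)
Your proposal is correct and follows the same skeleton as the paper's proof: unfold $B^1$ via Theorem \ref{mainlemma} and Lemma \ref{mut}, apply the orbit-mutation recurrence of Proposition \ref{rec} upstairs, and push the identity down through the graded surjection of Theorem \ref{ho}, with everything hinging on two sign-constancy facts, one for the block $\{b^{\widetilde t_1}_{i'k'}\;:\;i'\in[i],k'\in[k]\}$ (implicit in the no-$\Gamma$-$2$-cycle condition plus $\Gamma$-equivariance, in both your argument and the paper's) and one for the orbit $\{g^{\widetilde t_1}_{k'}\;:\;k'\in[k]\}$. The one genuine divergence is how you obtain the second fact. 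The paper gets it categorically: by Proposition \ref{gindex} the coordinates $g_{k'}$ are the multiplicities $[ind_{\underline{\mathcal T_0}}(\underline X):\underline T_{k'}]$, and Lemma \ref{distinct} forbids $\underline T_{k'}$ and $h\cdot \underline T_{k'}$ from appearing on opposite sides of the index triangle. You instead use $\Gamma$-equivariance of orbit mutation to write $g^{\widetilde t_1,a}_{h\cdot k_0}=g^{\widetilde t_1,h^{-1}a}_{k_0}$, converting the coordinates along a $\Gamma$-orbit of one $g$-vector into the $k_0$-th coordinates of the $g$-vectors of the variables $x_{a'}$, $a'\in[a]$, which all sit in the single cluster $\widetilde\mu_{[i_s]}\cdots\widetilde\mu_{[i_1]}(\widetilde X)$, and then invoke sign-coherence in the skew-symmetric case. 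This is a legitimate and arguably more elementary route, but note that the citable skew-symmetric sign-coherence results (\cite{GHKK}, \cite{DWZ}) are stated for finite rank, whereas $\widetilde Q$ is infinite; you would need the same truncation device as Lemma \ref{finite} and Theorem \ref{skew} to reduce to finitely many vertices before citing them, a step your write-up omits and which the paper's categorical argument via Lemma \ref{distinct} sidesteps entirely. Your explicit four-case analysis of $S$ using $[b]_+x-b\min(x,0)=[b]_+[x]_+-[-b]_+[-x]_+$ is an expanded but equivalent version of the paper's one-line summation and is fine.
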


\begin{proof}
By Lemma \ref{mut} and Theorem \ref{mainlemma}, let $(Q,\Gamma)$ be an unfolding of $B$. Assume $t_1 \overset{i_1}{--}t_2'\cdots t'_s\overset{i_s}{--} t$. By Theorem \ref{ho}, we have $g_{[i]}^{t_2}=\sum\limits_{i'\in [i]}g_{i'}^{\widetilde \mu_{[k]}(Q)}$ and $g_{[i]}^{t_1}=\sum\limits_{i'\in [i]}g_{i'}^{Q}$. By Proposition \ref{gindex} and Lemma \ref{distinct}, for $k',k''\in [k]$, both $g_{k'}^Q$ and $g_{k''}^Q$ are non-negative or non-positive. Thus, $\sum\limits_{k'\in [k]}min(g^Q_{k'},0)=min(\sum\limits_{k'\in [k]}g^Q_{k'},0)$. Using Proposition \ref{rec}, if $[i]=[k]$, then
$$g_{[i]}^{t_2}=\sum\limits_{i'\in [i]}g_{i'}^{\widetilde \mu_{[k]}(Q)}=-\sum\limits_{i'\in [i]}g_{i'}^{Q}=-g_{[i]}^{t_1};$$ if $[i]\neq [k]$, since $\sum\limits_{k'\in [k]}min(g^Q_{k'},0)=min(\sum\limits_{k'\in [k]}g^Q_{K'},0)$ and $b_{[i][k]}^{t_1}=\sum\limits_{i'\in [i]}b_{i'k}$, then
$$g_{[i]}^{t_2}=\sum\limits_{i'\in [i]}(g_{i'}^{Q}+\sum\limits_{k'\in [k]}[b_{i'k'}^{t_1}]_+g_{k'}^{Q}-\sum\limits_{k'\in [k]}b_{i'k'}^{t_1}min(g_{k'}^{Q},0))=g_{[i]}^{t_1}+[b_{[i][k]}^{t_1}]_+g_{[k]}^{t_1}-b_{[i][k]}^{t_1}min(g_{[k]}^{t_1},0).$$
The result holds.
\end{proof}

{\bf Acknowledgements:}\; This project is supported by the National Natural Science Foundation of China (No.11671350 and No.11571173).

\bibliographystyle{amsplain}

\end{document}